\documentclass[11pt]{amsart}
\usepackage{amssymb,amscd,color,tikz}
\usepackage[all]{xy}
\usepackage[symbol]{footmisc}

\usepackage[
urlcolor=blue,
colorlinks=true,
linkcolor=blue,
citecolor=blue,
]{hyperref}

\newtheorem{thm}{Theorem}[section]

\newtheorem{lem}[thm]{Lemma}

\newtheorem{prop}[thm]{Proposition}
  
\newtheorem{cor}[thm]{Corollary}
\theoremstyle{definition}
\newtheorem{defn}[thm]{Definition}
\newtheorem{nota}[thm]{Notation}
\newtheorem{rmk}[thm]{Remark}
\newtheorem{ex}[thm]{Example}
\theoremstyle{remark}

\numberwithin{equation}{section}

\newcommand{\alb}{\mathrm{Alb}}

\newcommand{\op}{\mathrm}

\newcommand{\gb}{\mathfrak b}

\newcommand{\bz}{\mathbb{Z}}
\newcommand{\bc}{\mathbb{C}}

\newcommand{\bp}{\mathbb{P}}
\newcommand{\bq}{\mathbb{Q}}
\newcommand{\br}{\mathbb{R}}

\newcommand{\au}{\mathrm{Aut}_0}
\newcommand{\aut}{\mathrm{Aut}}

 \newcommand{\diff}{\mathrm{Diff}}

\newcommand{\id}{\mathrm{id}}

\newcommand{\num}{\mathrm{num}}

\newcommand{\proj}{\mathrm{Proj}}

\newcommand{\red}{\op{red}}
\newcommand{\rest}[1]{{}_{\left|#1\right.}}

\newcommand{\sign}{\mathrm{Sign}}

\newcommand{\sing}{\mathrm{sing}}

\newcommand{\tr}{\mathrm{tr}}

\newcommand{\mf}{\mathcal{F}}

\newcommand{\mo}{\mathcal{O}}

\newcommand{\restr}[1]{{\raisebox{-0.0\height}{$\mid_{#1}$}}}

\newcommand{\separate}{\medskip}

\begin{document}
\title[Automorphisms of surfaces  with $\MakeLowercase{q}=1$]{Automorphisms of surfaces of general type with $\MakeLowercase{q}=1$ acting trivially in cohomology}
\author{Jin-Xing Cai}
\address{Jin-Xing Cai\\LMAM, School of  Mathematical Sciences\\Peking University\\Beijing 100871\\P.R.~China}
\email{jxcai@math.pku.edu.cn}
\author{ Wenfei Liu}
\address{Wenfei Liu \\School of Mathematical Sciences\\Xiamen University\\Xiamen, Fujian 361005\\P.R.~China}
\email{wliu@xmu.edu.cn}
\thanks{This cooperation has been supported by the  NSFC (No.~11471020 and No.~11501012) and by the DFG via the Emmy Noether  Nachwuchsgruppe
``Modulr\"aume und Klassifikation von algebraischen Fl\"achen und Nilmannigfaltigkeiten mit linksinvarianter komplexer Struktur''. The second author was also partially supported by the Bielefelder Nachwuchsfonds.}
\subjclass[2010]{Primary 14J50; Secondary
14J29}
\date{}
\dedicatory{}
\keywords{}

\begin{abstract}

Let $S$ be a complex minimal surface of general type with irregularity $q(S)=1$ and $\au(S)\subset \aut(S)$ the subgroup of automorphisms
    acting trivially on
$H^*(S, \mathbb{ Q})$. In this paper  we show that $|\au(S)|\leq4$,  and
if the equality holds then $S$ is a surface isogenous to a product of unmixed type.
Moreover, examples of surfaces with $|\au(S)|=4$ and all possible values of the geometric genus $p_g(S)$ are provided.
\end{abstract}
\maketitle

\tableofcontents

\section*{Introduction}

In studying the automorphism group $\aut(X)$ of a compact complex manifold $X$ it is important to consider its cohomology representation, that is, its natural action on the cohomology ring, say with rational coefficients. The action of the automorphism group on the cohomology is also relevant in the construction of \emph{fine} moduli spaces (\cite[Lecture~10]{Pop77}, see \cite{JL15} for a recent treatment of the case of hypersurfaces) and in the attempt to equip Teichm\"uller spaces with a complex structure (\cite[Sec.~1.3]{Cat13}). There the faithfulness of the action seems to be a desired property. We say $X$ is \emph{rationally cohomologically rigidified} if the action of $\aut(X)$ on $H^*(X,\bq)$ is faithful. Obviously, it is the same thing as requiring that $\aut(X)$ acts faithfully on $H^*(X,\bc)$, the cohomology ring with complex coefficients. In general, those automorphisms acting trivially on $H^*(X,\bq)$ are called \emph{numerically trivial} and they form a subgroup of the (full) automorphism group, to be denoted by $\au(X)$ in this paper.

It is well known that smooth projective curves are rationally cohomologically rigidified, unless the identity component of the automorphism group is nontrivial (in this case the genus is necessarily less than 2). The situation is more complicated in dimension two: there exist smooth projective surfaces with Kodaira dimension ranging from 0 to  2, which have automorphisms, not belonging to the identity component, acting trivially on the cohomology with rational coefficients (see \cite{Pe80}, \cite{BP83}, \cite{MN84}, \cite{Muk10} for surfaces of Kodaira dimension 0 and 1, and \cite{Cai06}, \cite{Cai07}, \cite{CLZ13} for surfaces of general type). 

The automorphism group of a surface $S$ of general type is finite, and thus $\au(S)$ does not lie in the identity component of $\aut(S)$ as soon as it is nontrivial. It turns out that nontrivial $\au(S)$ occurs only for those with irregularity $q(S)\leq 2$. Moreover, if $q(S)=2$ then the order of $\au(S)$ is at most 2, and in case of nontrivial $\au(S)$ the signature of the minimal model of $S$ vanishes (\cite[Theorem~1.1]{CLZ13}).

In this paper we investigate surfaces of general type with $q(S)=1$:
\begin{thm}\label{main} 
Let $S$ be a  complex minimal projective surface of general type with $q(S)=1$. Then we have $|\au(S)|\leq4$ with equality only if $S$ is a surface isogenous to a product of unmixed type.
\end{thm}

Surfaces isogenous to a product are those surfaces admitting a product of two smooth curves as an \'etale cover. By taking the Galois closure of the covering (\cite[Prop.~3.11]{Cat00}) we may give a more restrictive definition of them, see Definition~\ref{def: isog}.

In view of the diversity of surfaces of general type the geometric characterization for surfaces with $\au(S)$ of maximal order in Theorem~\ref{main} seems quite satisfactory. Inspired by the results of the current paper, the second named author \cite{Liu15} has shown recently that surfaces of general type with $q(S)=2$ and nontrivial $\au(S)$ must be isogenous to a product of curves. 

The bound in Theorem~\ref{main} is optimal, as series of surfaces with $q(S)=1$ and $|\au(S)|=4$ are constructed in Section~\ref{sec: examples}, realizing all possible values of the geometric genus. To complete the picture further we also povide examples of surfaces of general type with $q(S)=1$ and $\au(S)\cong\bz/3\bz$. See also \cite{Cai06} and \cite{Cai07} for examples of surfaces of general type with $q(S)=1$ and $\au(S)\cong\bz/2\bz$.  

One might ask for a simple reason for the existence of nontrivial $\au(S)$. In fact, a diffeomorphism that is homotopic to the identity map will act trivially on the cohomology, even with integral coefficients. In particular, if an automorphism of an algebraic surface $S$, viewed as a diffeomorphism of the underlying differential manifold of $S$, comes from the identity component $\diff^0(S)$ of the diffeomorphism group, then it acts trivially on the cohomology. Unfortunately, for those irregular surfaces with $|\au(S)|=4$ in Theorem~\ref{main} this is not the case, since surfaces isogenous to a product are rigidified, that is, we have $\aut(S)\cap\diff^0(S)=\{\id_S\}$, see \cite[Prop.~4.8]{CLZ13}.

Theorem~\ref{main} follows from Theorems~\ref{thm: bound k=2}, \ref{thm: G=4 k=2}, \ref{thm: bound k=1} and \ref{thm: isog k=1} where the genus of the Albanese fibration  of the surfaces with $q(S)=1$ and $|\au(S)|=4$ is also determined. The starting point of the proofs is that $\au(S)$ induces the trivial action on the Albanese variety, so that the Albanese map factors through the quotient map $S\rightarrow S/\au(S)$ (see Lemma~\ref{lem: alb quot}). 

The quotient map is of fundamental importance in studying automorphisms in general. In our context the quotient by $\au(S)$ inherits several invariants such as the geometric genus of the original surface $S$. This can be understood as giving certain bound on the quotient surface. Ultimately, we rely on the Bogomolov--Miyaoka--Yau inequality to conclude that $|\au(S)|\leq 4$. This bound has been obtained in \cite{Cai04} under the assumption that $\chi(\mo_S)>188$, where a large $\chi(\mo_S)$ is to ensure that the canonical map is well-behaved (cf.~\cite{Be79}). We focus instead on the canonical system rather than the map it induces and it is thus possible to deal with all  surfaces of general type with $q(S)=1$ in one go, regardless of their geometric genus.

The characterization of surfaces with maximal $\au(S)$ is divided into two steps.  First we prove the numerical equality $K_S^2=8\chi(\mo_S)$, which is equivalent to the vanishing of the signature of the underlying 4-dimensional differentiable manifold of $S$. A key role is played by certain versions of the equivariant signature formula of Hirzebruch and Zagier (\cite[p.~177]{HZ74}, see also \cite[1.6]{Cai09}). The second more subtle step consists in doing a careful analysis of the fixed loci,  which a priori are a collection of scattered points and curves, to show that every singular fibre of the Albanese map is of the form $2C$ where $C$ is a smooth curve. A lemma of Serrano \cite[Lemma~5]{Se95} then guarantees  that the surfaces are isogenous to a product of curves of unmixed type. 

\separate

\noindent{\bf Notation and Conventions.} We work over the complex numbers $\bc$.

Let $X$ be a compact complex manifold of dimension $n$. Then 
\begin{itemize}
 \item for a sheaf $\mf$ on $X$, $h^i(X,\mf)$ is the dimension of its $i$-th cohomology group $H^i(X,\mf)$ and $\chi(\mf)$ the Euler characteristic;
 \item $q(X):=h^1(X,\mo_X)$ and $p_g(X):=h^0(X, K_X)$ are the irregularity and the geometric genus of $X$ respectively;
 \item $e(X)$ is the topological Euler characteristic;
 \item if $X$ is even dimensional, $\sign(X)$ denotes the signature of the intersection form on the middle cohomology $H^n(X,\br)$;
 \item the Albanese torus of $X$ is denoted by $\alb(X)$ and the Albanese map by $a_X\colon X\rightarrow\alb(X)$;
 \item  the group of holomorphic automorphisms acting trivially on the cohomology ring $H^*(X,\bq)$ will be denoted by $\au(X)$. For simplicity of notation we often write $G_0$ for $\au(X)$.
\end{itemize}

If $f\colon S\rightarrow B$ is a fibration from a smooth projective surface onto a smooth projective curve, then genus  of $f$, denoted by $g(f)$, means the genus of a general fibre.

The symbol $\sim$ (resp.~$\sim_\bq$) denotes (resp.~$\bq$-)linear equivalence between (resp.~$\bq$-)divisors while $\equiv$ denotes numerical equivalence.

For a finite group $G$ we will denote its order by $|G|$. If it acts on a set $X$ then the fixed point set of an element $\sigma\in G$ is denoted by \[                                                                                                        
  X^\sigma:=\{p\in X\mid \sigma(p)=p\}.                                                                                                                                                   \]

For a finite abelian group $G$ we denote by
$\widehat{G}$  the character group of $G$. For a representation $V$
of $G$ and a character $\chi\in \widehat{G}$ we write
\[
 V^\chi=\{v\in V \mid g(v)=\chi(g)v \ \textrm{for all}\ g\in G\}. 
\]
Note that $V^\chi$ is contained in the subspace of $V$ that is pointwise fixed by $\ker(\chi)$.

The dihedral group of order $n$ will be denoted by $D_n$ and quaternion group of order 8 by $Q_8$. 

\separate

\noindent{\bf Acknowledgements.} We would like to thank Fabrizio Catanese for his interest in our project and a referee for providing an alternative description of the curves in Examples~\ref{ex: 1} and \ref{ex: 2}.

\section{Basic properties of $\au(X)$}\label{sec: au}
Let $X$ be a smooth projective variety and $G\subset\aut(X)$ a finite group of automorphisms. The quotient map $\pi\colon X\rightarrow X/G$ plays a fundamental role in studying the action of $G$. We make several observations about it in the case when $G$ acts trivially on the cohomology.

We remark that the following Lemmata~\ref{lem: hol inv quot} and \ref{lem: alb quot}, Remark~\ref{rmk: 1} are valid in the more general context of compact K\"ahler manifolds.
\begin{lem}\label{lem: hol inv quot}
Let $X$ be a smooth projective variety and $G$ a finite group of automorphisms acting trivially on $H^*(X,\bc)$. Let $\lambda\colon Y\rightarrow X/G$ be a resolution of singularities. Then the following holds.
\begin{itemize}
 \item[$\mathrm{(i)}$] $ h^i(Y, \mo_Y) = h^i(X,\mo_X)$ for any $0\leq i \leq \dim X$. As a consequence, 
 \[
  q(Y)=q(X),\, p_g(Y) =p_g(X) \text{ and }\chi(\mo_Y)=\chi(\mo_X).
 \]
\item[]  \hspace*{\dimexpr\linewidth-\textwidth\relax}
\begin{minipage}[t]{\textwidth}
Now assume that $p_g(X)>0$. Let $\tilde\pi\colon \tilde X \xrightarrow{\rho} X\dashrightarrow Y$ be a morphism eleminating the indeterminacy of the induced rational map $X\dashrightarrow Y$.
\end{minipage}
\item[$\mathrm{(ii)}$] 	There is an equality of complete linear systems
\[
|K_{\tilde X}|=\tilde\pi^*|K_Y| + \tilde R,
\] 
where $\tilde R$ is the ramification divisor of $\tilde\pi$.
\item[$\mathrm{(iii)}$] Suppose $D\subset X$ is an irreducible subvariety of codimension 1, fixed by some nontrivial element of $G$. 
Then $D$ is contained in the base locus of the canonical system $|K_X|$.
\end{itemize}
\end{lem}
\begin{proof}
(i) Since $G$ acts trivially on
$H^*(X,\bc)$, so does it on the direct summands $H^i(X,\mathcal O_X)$ in the Hodge decomposition of $H^*(X,\bc)$. We have by the Grothendieck--Leray spectral sequence
\begin{equation}\label{eq: coh quot}
  H^i(X/G,\mathcal O_{X/G}) = H^i(X/G,\pi_*^G \mathcal O_X) = H^i(X,\mathcal O_X)^G = H^i(X,\mathcal O_X).
\end{equation}
where $\pi\colon X\rightarrow X/G$ is the quotient map and $\pi_*^G\mathcal O_X$ denotes the $G$-invariant part of $\pi_*\mathcal O_X$. Since $X/G$ has only quotient (hence rational) singularities the cohomology groups of $X/G$ and its resolution $Y$ are the same:
\[
 H^i(X/G,\mathcal O_{X/G}) = H^i (Y, \mo_Y).
\]
Together with \eqref{eq: coh quot} this yields the desired conclusion.

(ii) The pull-back map $\tilde\pi^*\colon H^0(Y, K_Y)\rightarrow H^0(\tilde X, K_{\tilde X})$ is an injective linear map of vector spaces. By (i), $\tilde\pi^*$ is in fact an
isomorphism. This proves (ii).

(iii) The subvariety $D$ is contained in the image of $\tilde R$ in $X$, which lies in the base locus of $|K_{\tilde X}|$ by (ii). Since $|K_X|=\rho_*|K_{\tilde X}|$, where the push-forward operator
$\rho_*$ of divisors is defined as in \cite[Section~I]{Be78}, the subvariety $D$ lies in the base locus of $|K_X|$.
\end{proof}
\begin{rmk}\label{rmk: 1}
By the same proof Lemma~\ref{lem: hol inv quot} is also valid if we only assume that $G$ acts trivially on $H^i(X,\mo_X)$ for $0\leq i\leq \dim X$.
\end{rmk}

For lack of an appropriate reference, we give a proof of the following version of the topological Lefschetz fixed point formula. 
\begin{thm}\label{thm: lefschetz}
Let $X$ be a compact differentiable manifold (resp.~a compact complex space), and let $\sigma\in\diff(X)$ (resp.~$\sigma\in\aut(X)$) be of finite order. Then one has the topological Lefschetz fixed point formula
\begin{equation}\label{eq: lefschetz}
e(X^\sigma)=\sum_{0\leq i \leq n}(-1)^i \tr \left(\sigma^*\,|\,H^i(X, \bc)\right)
\end{equation}
where $\tr(\cdot)$ denotes the trace of an endomorphism of a vector space.
\end{thm}
\begin{proof}
We first remark that $X$ has a finite $\langle\sigma\rangle$-equivariant triangulation. Indeed, if $X$ is a compact differentiable manifold, the existence of such a triangulation is guaranteed by \cite{I78}. If $X$ is a compact complex space, then $X/\langle\sigma\rangle$ is again a compact complex space and we can stratify it into locally closed strata $A_j$ such that all the points over a single $A_j$ have the same stabilizer of the $\langle\sigma\rangle$-action. By \cite[Corollary~2.2]{J83} there is a finite triangulation of $X/\langle\sigma\rangle$ such that each stratum $A_j$ is a union of the support of (open) simplices, and one obtains a finite $\langle\sigma\rangle$-equivariant triangulation on $X$ by \cite[Theorem~5.5]{I83}.
 
 Let $C^i(X)$ be the vector space of the $i$-cochains over $\bc$, with basis dual to the set of $i$-simplices. The action $\sigma^*$ on $C^i(X)$ is induced by the permutation of $\sigma$ on the set of $i$-simplices. Thus we have
\begin{equation}\label{eq: inv cochain}
\tr \left(\sigma^*\,|\,C^i(X)\right)=\dim C^i(X)^\sigma =\dim C^i(X^\sigma).
\end{equation}
where $C^i(X)^\sigma$ is the $\sigma^*$-fixed part of $C^i(X)$ and it coincides with space of $i$-cochains supported on $X^\sigma$.

On the other hand, $H^i(X, \bc)$ are the homology groups of the cochain complex
 \[
 0\rightarrow C^0(X) \rightarrow C^1(X) \rightarrow\cdots\rightarrow C^{n-1}(X)\rightarrow C^n(X)\rightarrow 0
 \]
 where $n$ is the dimension of $X$. It is not hard to see that
 \begin{align*}
  \sum_{0\leq i \leq n}(-1)^i \tr \left(\sigma^*\,|\,H^i(X, \bc)\right) &= \sum_{0\leq i \leq n}(-1)^i \tr \left(\sigma^*\,|\,C^i(X)\right) \\&=  \sum_{0\leq i \leq n}(-1)^i \dim C^i(X^\sigma)\hspace{1cm} \text{ by \eqref{eq: inv cochain}}\\
  &=e(X^\sigma)
 \end{align*}
\end{proof}

The topological and holomorphic Lefschetz fixed point formulae (see \cite[Theorem~4.6 and Proposition~4.8]{AS68} for the later) have the following consequence.
\begin{lem}(\cite[Lemma 2.1]{Cai12})\label{lem: inv} Let $S$ be a
complex  nonsingular   projective surface. If there is
  an involution $\sigma$ of $S$ which acts trivially in
$H^2(S, \bq)$, then $K_S^2=8\chi(\mo_S)+\sum_{i=1}^m D_i^2$,
where   $D_1,\ \cdots,\ D_m$ ($m\geq0$) are $\sigma$-fixed curves.
\end{lem}

\begin{lem}\label{lem: alb quot}
 Let $X$ be a smooth projective variety with topological Euler characteristic $e(X)\neq 0$ and $G$ a subgroup of $\au(X)$. Then the Albanese map of $X$ factors as
 \[
a_X\colon X\xrightarrow{\pi} X/G\rightarrow \alb(X)
  \]
where $\pi\colon X\rightarrow X/G$ is the quotient map.
\end{lem}
\begin{proof}
Let $\sigma \in G$. The automorphism $\sigma_a$ of $\alb(X)$, induced by $\sigma$, fits into the following commutative diagram:
\[
 \xymatrix{
X \ar[r]^\sigma \ar[d]_{a_X} & X \ar[d]^{a_X}\\
 \alb(X)\ar[r]^{\sigma_a} &\alb(X)
}
\]
Since $\sigma$ induces the trivial action on $H^1(X,\bc)$, which can be identified with $H^1(\alb(X),\bc)$, the induced map $\sigma_a$ must be a translation of $\alb(X)$.  

On the other hand, since  $e\left(X^\sigma\right) = e(X)\neq0$ by \eqref{eq: lefschetz}, the fixed point set $X^\sigma$ is not empty. Note that $\sigma_a$ fixes the point $a_X(p)$ for any $p\in X^\sigma$,  so it can only be the identity map.
\end{proof}

Let $Y$ be a smooth model of $X/G$. By the universality of the Albanese maps and Lemma~\ref{lem: alb quot} we know that the Albanese varieties $\alb(Y)$ and $\alb(X)$ can be identified after fixing suitable base points for the Albanese maps. Indeed, we have a commutative diagram 
\begin{equation}\label{diag: alb}
 \begin{aligned}
 \xymatrix{
   X\ar[r]^\pi & X/G\ar[r]\ar@{=}[d] &\alb(X)\ar@{=}[d]\\
  Y  \ar@{-->}[r] & X/G\ar[r] &\alb(Y).
   }
    \end{aligned}
   \end{equation}

From now on we focus on the case of surfaces.
\begin{lem}\label{lem: g alb}
Let $S$ be a surface of general type with $q(S)=1$ and $G$ a subgroup of $\au(S)$. Suppose $X$ is a smooth model of $S/G$. Then the Kodaira dimension $\kappa(X)\geq 1$ and the equality holds if and only if the Albanese map $a_X\colon X\rightarrow \alb(X)$ has genus 1.
\end{lem}
\begin{proof}
Recall that the automorphism groups of surfaces of general type are finite. By Lemma~\ref{lem: hol inv quot} we have 
 \[
  p_g(X)=p_g(S)\geq 1 \text{ and }q(X)=q(S)=1.
 \]
So $\kappa(X)\geq 1$ by the Kodaira--Enriques classification of algebraic surfaces. 
 
If $X$ is of general type then any fibration has genus at least $2$.

Now suppose $X$ has Kodaira dimension 1. We consider the $m$-canonical map $\varphi_m\colon X\rightarrow B$ of $X$ for a sufficiently large and divisible $m$. Then $\varphi_m$ is an elliptic fibration. Since $q(X)=1$, the genus of $B$ is at most $1$. If $g(B)=0$, then $q(X)= g(B) + q(\varphi_m^*b)$ and $X$ is birational to $B\times \varphi_m^*b$, where $\varphi_m^*b$ is the fiber over a general $b\in B$. This is a contradiction to the fact that $X$ has Kodaira dimension 1. So $g(B)=1$ and $\varphi_m$ is just the Albanese map of $X$. It follows that the genus of $a_X$ is 1.

This finishes the proof of the lemma.
\end{proof}

We end this section with a useful observation.
\begin{lem}\label{lem: neg}
Let $S$ be a smooth projective surface and $C\subset S$ an irreducible curve with negative self-intersection. Then the following holds.
\begin{enumerate}
\item The curve $C$ is invariant under the action of $\au(S)$. 
\item Suppose furthermore that $f\colon S\rightarrow B$ is a fibration preserved by an automorphism $\sigma\in\au(S)$, that is, $f\circ \sigma=f$, and $C$ is a section of $f$. Then $C$ is fixed pointwise by $\sigma$.
\end{enumerate}
\end{lem}
\begin{proof}
(i) Suppose on the contrary that $\gamma(C)\neq C$ for some $\gamma\in \au(S)$. Then $C^2=C\cdot\gamma(C)\geq0$, a contradiction to the assumption.

(ii)  For every $p\in C$, since $C$ is a section of $f\colon S\rightarrow B$, we have $\{p\}=C\cap F_b$ with $b=f(p)$. Due to (i) and the assumption, both $C$ and $F_b$ are preserved by $\sigma$,  so $p$ is $\sigma$-fixed.
\end{proof}

\section{Surfaces with quotient of general type}\label{sec: k=2}
Let $S$ be a minimal surface of general type with $q(S)=1$ and $G_0:=\au(S)$ the automorphism group acting trivially on $H^*(S,\bc)$. We know from Lemma~\ref{lem: g alb} that $\kappa(S/G_0)\geq 1$, where $\kappa(S/G_0)$ denotes the Kodaira dimension of a smooth model of $S/G_0$.

Let $\lambda\colon\tilde T\rightarrow S/G_0$ be the minimal resolution of singularity and $\eta\colon \tilde T\rightarrow T$ the contraction to the minimal model of $\tilde T$. Then we have the following commutative diagram
\begin{equation}\label{diag: resol}
\begin{aligned}
 \xymatrix{
 \tilde S \ar[r]^{\tilde\pi}\ar[d]_\rho &\tilde T\ar[d]^\lambda\ar[r]^\eta& T\\
  S \ar[r]^\pi & S/G_0&
 }
 \end{aligned}
\end{equation}
where $\pi\colon S\rightarrow S/G_0$ is the quotient map and $\tilde S$ is the minimal resolution of singularities of the normalization of the fibre product $S\times_{S/G} \tilde T$.

In this section we will treat the case where $T$ is of general type.

\subsection{Bounding $|\au(S)|$, part I}
We first bound the order of an automorphism group in terms of the volumes of the originial surface and the quotient, thus improving this kind  of results previously obtained by  Xiao (\cite[Lemma~2 and Prop.~1 (i)]{X94}).
\begin{prop}\label{prop: X94}
Let $S$ be a minimal surface of general type and $G$ a group of its automorphisms. Assume the quotient $S/G$ is of general type and let $T$ be its minimal (smooth) model. Then the following holds.
\begin{enumerate}
 \item[(i)] $K_S^2\geq |G|K_T^2 + \sum (r_C-1) K_S C$,
  where the sum is taken over all irreducible curves 
   $C\subset S$ and $r_C$ is the order of the stabilizer at a general point of $C$.  In particular, we have $K_S^2\geq |G|K_T^2$.
 \item[(ii)] $K_S^2 = |G|K_T^2$ if and only if the following holds:
\begin{enumerate}
 \item the fixed point set $S^\gamma$ is finite for any nontrivial $\gamma\in G$;
 \item the quotient $S/G$ has at most canonical singularities.
\end{enumerate}
In this case $T$ is isomorphic to the minimal resolution of singularities of $S/G$.
\end{enumerate}
\end{prop}
\begin{proof}Let $\pi\colon S\rightarrow S/G$ be the quotient map.
For any irreducible curve $C\subset S$ we denote by $\bar{C}$ the image of
$C$ under $\pi$. There is a $\bq$-linear equivalence:
\begin{equation}\label{eq: quotient volume}
K_S \sim_\bq \pi^* \left(K_{S/G} + \sum_C\left(1-\frac{1}{r_C}\right) \bar C\right),
\end{equation}
where the sum is taken over all irreducible curves $C\subset S$ and $r_C$ is the order of the stabilizer at a general point of $C$.  Since $K_S$ is nef and $\pi$ is finite, the $\bq$-divisor
 $K_{S/G} + \sum\left(1-\frac{1}{r_C}\right) \bar C$ is also nef.

Let $\lambda\colon\tilde T\rightarrow S/G$ be the minimal resolution of
singularities and $\eta\colon \tilde T\rightarrow T$ the contraction
to the minimal model, see the following diagram:
\[
 \xymatrix{&&\tilde T\ar[ld]_\lambda\ar[rd]^\eta&\\
   S \ar[r]^\pi & S/G && T
 }
 \]
We have 
\[
\lambda^* K_{S/G}  \sim_\bq K_{\tilde T}+ \sum a_i E_i \text{ and } \eta^* K_T \sim_\bq K_{\tilde T} - A,
\]
where $E_i$ are the exceptional divisors of $\lambda$ and $A$ is an
effective divisor supported on the whole exceptional locus of
$\eta$. Since the resolution of singularities $\lambda\colon \tilde T\rightarrow S/G$ is minimal, we have $a_i\geq 0$ in the above
formula. It follows that
\begin{equation}\label{eq: quotient volume1}
\lambda^*\left( K_{S/G} + \sum\left(1-\frac{1}{r_C}\right) \bar C\right) \sim_\bq \eta^* K_T +\Omega,
\end{equation}
 where $\Omega:= \sum\left(1-\frac{1}{r_C}\right)\lambda^* \bar C+\sum a_i E_i +
A$ is an effective divisor.

Since both $\lambda^*\left( K_{S/G} + \sum\left(1-\frac{1}{r_C}\right) \bar C\right)$ and
$\eta^* K_T$ are big and nef, the inequality in the following computation holds:
\begin{equation}\label{xiaoineq}
\begin{aligned}
K_S ^2 &=|G|\left(K_{S/G} + \sum\left(1-\frac{1}{r_C}\right) \bar C\right)^2\ \
 \textrm{ (by \eqref{eq: quotient volume})} \\
&= |G|\lambda^*\left(K_{S/G} + \sum\left(1-\frac{1}{r_C}\right) \bar C\right)^2 \\
&= |G|\left((\eta^* K_T)^2 + \lambda^*\left(K_{S/G} + \sum\left(1-\frac{1}{r_C}\right) \bar C\right)\Omega  + \eta^* K_T\Omega\right)\\
&\geq |G| K_T^2 +|G| \left(K_{S/G} + \sum\left(1-\frac{1}{r_C}\right) \bar C\right)
\sum\left(1-\frac{1}{r_C}\right)\bar C\\
&=|G|K_T^2 + \pi^*  \left(K_{S/G} + \sum\left(1-\frac{1}{r_C}\right) \bar C\right)
\sum\left(1-\frac{1}{r_C}\right)\pi^*\bar C\\
&=|G|K_T^2 + K_S \sum (r_C-1)C.
\end{aligned}
\end{equation}
 This establishes (i).

Now by (\ref{xiaoineq}), $K_S^2 = |G|K_T^2$ holds if
and only if
\begin{align}\label{eq x1}
 \lambda^*\left( K_{S/G} + \sum\left(1-\frac{1}{r_C}\right) \bar C\right)
 \Omega=0 \ \ \textrm{and}\ \
 \eta^* K_T\Omega =
0,
\end{align}
which by the Hodge index theorem (assuming (\ref{eq x1}) we have $\Omega^2=0$ by
\eqref{eq: quotient volume1}) is in turn
equivalent to $\Omega=0$, that is, all of the three effective $\bq$-divisors $ \sum a_i E_i$, $\sum\left(1- \frac{1}{r_C}\right) \lambda^*\bar C$ and $A$ are 0. Hence the equality $K_S^2 = |G|K_T^2$ implies (a) and (b).

Conversely, (a) and (b) imply $ K_S\sim\pi^*K_{S/G} \text{ and } K_{\tilde T}\sim\lambda^*K_{S/G}$.
The nefness of $K_S$ implies the nefness of $K_{S/G}$ and in turn that of $K_{\tilde T}$. So $\tilde T$ is already minimal and we infer that
\[
 K_S^2 = |G|K_{S/G}^2 = |G|K_{\tilde T} = |G|K_T^2.
\]
\end{proof}

\begin{thm}\label{thm: bound k=2}
Let $S$ be a minimal surface of general type with $q(S)=1$ such that $\kappa(S/G_0)=2$, where $G_0:=\au(S)$. Then $|G_0|\leq 4$.
\end{thm}
\begin{proof}
Let $T$ be the minimal model of $S/G_0$. By Lemma~\ref{lem: hol inv quot},
\[
q(T)=q(S)>0 \text{ and }\chi (\mo_T) =\chi (\mo_S)
\]
 and hence one has by \cite[Lemma 14]{Bo}
 \begin{equation}\label{eq: noe irr}
  K_T^2 \geq2\chi (\mo_T) =2 \chi (\mo_S).
 \end{equation}
 Combined with Proposition~\ref{prop: X94} we obtain
 \begin{align}\label{eq: bound k=2}
 K_S^2\geq|G_0|K_T^2 \geq 2|G_0|\chi (\mo_S).
  \end{align}
The theorem follows from \eqref{eq: bound k=2} together with the Bogomolov--Miyaoka--Yau inequality.
\end{proof}

\subsection{Surfaces with $|\au(S)|=4$, part I}
Now we investigate the surfaces with $\au(S)$ of maximal order and characterize them as follows.
\begin{thm}\label{thm: G=4 k=2}
Let $S$ be a minimal surface of general type with $q(S)=1$ such that $\kappa(S/G_0)=2$ and $|G_0|=4$ where $G_0:=\au(S)$. Then the following holds.
 \begin{enumerate}
\item[(i)] The Albanese fibration $a_S\colon S\rightarrow \alb(S)$ has genus 5.
\item[(ii)] The group $G_0$ is isomorphic to $(\bz/2\bz)^2$.
\item[(iii)] $S$ is a surface isogenous to a product of unmixed type.
 \end{enumerate}
\end{thm}

We recall the definition of surfaces isogenous to a (higher) product.
\begin{defn}\label{def: isog} Let $C,\,D$ be smooth curves of genus at least 2, and $G$ is a finite group acting (faithfully) on $C$ and $D$. If the diagonal subgroup $\Delta_G$ of $G\times G$ acts freely on $C\times D$ then the smooth quotient $S:=(C\times D)/\Delta_G$ is called a surface isogenous to a product of unmixed type.
\end{defn}
\begin{rmk}
 There is also the notion of surfaces isogenous to a product of \emph{mixed} type $S=(C\times C)/G$ where the group $G$ acts freely and interchanges the two factors of $C\times C$.
\end{rmk}

Given a surface isogenous to a product of unmixed type $S=(C\times D)/\Delta_G$ its invariants satisfy
\[
 K_S^2 = 8\chi(\mo_S) = \frac{8}{|G|}(g(C)-1)(g(D)-1)\text{ and } q(S)=g(C/G)+g(D/G).
\]

We need two intermediate results for the proof of Theorem~\ref{thm: G=4 k=2}.
\begin{lem}\label{lem: iso}
Let $S$ be as in Theorem~\ref{thm: G=4 k=2}. Then the set $S^\sigma$ is finite for any nontrivial $\sigma\in \au(S)$.
   \end{lem}
 \begin{proof}
We use a similar argument as in \cite[3.6]{CLZ13}.

Write $G_0=\au(S)$ as before. Since $G_0$ is of order 4, it suffices to prove the lemma for $\sigma\in G_0$ that has order 2. 

Suppose on the contrary that such a  $\sigma$ fixes curves $D_1,\dots,D_u$. Then, by Lemma~\ref{lem: inv},
 \begin{equation}\label{top}
 K_S^2=8\chi(\mo_S)+\sum_{i=1}^u D_i^2 \hspace{1cm} 
 \end{equation}
On the other hand, we have by Proposition~\ref{prop: X94}
 \begin{align*}
K_{S}^2  &\geq 4K_T^2+\sum_{i=1}^u  K_{S}D_i\\
 &\geq8\chi(\mo_S)+(K_{S}-\sum_{i=1}^uD_i)\sum_{i=1}^uD_i+\sum_{i=1}^uD_i^2 \ \ \
 (\textrm{by \eqref{eq: noe irr}})\\
  &\geq 8\chi(\mathcal O_S)+2+\sum_{i=1}^uD_i^2,
 \end{align*}
 where $T$ is the minimal model of $S/G_0$ and the last inequality is because each $\sigma$-fixed curve is contained in the fixed part of $|K_S|$ (Lemma~\ref{lem: hol inv quot})  and each effective canonical divisor of $S$ is $2$-connected (cf.~\cite[VII, Prop.~6.2]{BHPV04}).
 This contradicts (\ref{top}).
 \end{proof}

\begin{cor}\label{cor: G=4 k=2}
Let $S$ be as in Theorem~\ref{thm: G=4 k=2} and $T$ the minimal model of $S/G_0$. Then the following holds.
\begin{enumerate}
\item[(i)] $K_S^2 = 8\chi(\mo_S)$ or, equivalently, $e(S)=4\chi(\mo_S)$.
\item[(ii)] The invariants of $T$ satisfy $K_T^2 = 2\chi(\mo_T)$ and the Albanese fibration $a_T\colon T\rightarrow \alb(T)$ has genus 2, with singular fibres all of type (0) in the sense of Horikawa \cite{Hor77}.
\item[(iii)] The quotient $S/G_0$ has only canonical singularities and $T$ is isomorphic to the minimal resolution of singularities of $S/G_0$.
\end{enumerate}
\end{cor}
\begin{proof}
(i) The first equality follows from Lemma~\ref{lem: iso} and Lemma~\ref{lem: inv}, applied to an involution from $G_0$. The second inequality is equivalent to the first one by the Noether formula.

(ii) Since $K_S^2=8\chi(\mo_S)$, the inequalities in \eqref{eq: noe irr} and \eqref{eq: bound k=2} are all equalities. In particular, $K_T^2=2\chi (\mo_T)$. By \cite[Theorem 5.1]{Hor81} the Albanese map $a_T\colon T\rightarrow \alb(T)$ is a fibration of  genus 2, whose singular fibres are all of type (0) in the sense of \cite{Hor77}.

(iii) Since $K_S^2=|G_0|K_T^2$ holds, the assertion follows from Proposition~\ref{prop: X94}.
\end{proof}

In the following lemma we record how $(\bz/2\bz)^2$-actions on surfaces behave when there are no fixed curves.
\begin{lem}\label{lem: Z22 acts}
Let $G\cong(\bz/2\bz)^2$ be a group of automorphisms of a smooth surface $S$. Denote by $\sigma_1,\sigma_2, \sigma_3$ be the three involutions in $G$. If $\sigma_1$ and $\sigma_2$ have only isolated singularities then the three fixed point sets
$S^{\sigma_i}$, $1\leq i\leq 3$ are pairwise disjoint.
\end{lem}
\begin{proof}Otherwise, there is a point $p\in S$
fixed by the whole group $G$. Then there are local coordinates $(x,y)$ around $p\in
S$ such that each $\gamma\in G$ acts as $$(x,y)\mapsto
(\chi_1(\gamma)x,\chi_2(\gamma)y),$$ where $\chi_1,\chi_2\in\widehat
G$ are two distinct characters of $G$. The assumption implies that
$\sigma_1$, $\sigma_2\not\in \ker(\chi_i)$ for $i=1, 2$. So
$\ker(\chi_i)=\langle \sigma_3 \rangle$ for $i=1, 2$, and hence
$\sigma_3$ induces the trivial action on the tangent space of $S$ at $p$. This implies that $\sigma_3$ is trivial, a  contradiction.
\end{proof}

\begin{proof}[Proof of Theorem~\ref{thm: G=4 k=2}]
Let $T$ be the minimal model of $S/G_0$ and $B$ the identified Albanese varieties  $\alb(S)$ and $\alb(T)$, see \eqref{diag: alb}.

(i) For a general $b\in B$ the fibre $a_T^*b$ is the quotient of $a_S^*b$ by $G_0$. By Lemma \ref{lem: iso} the action of $G_0$ on $a_S^*b$ is free and hence the quotient map $a_S^*b\rightarrow a_T^* b$ is \'etale. Since $g(a_T^*b)=2$ by Corollary~\ref{cor: G=4 k=2} one computes easily $g(a_S^*b) = 5$.

(ii) Since $|G_0|=4$, there are two possibilities: $G_0\cong\bz/4\bz$ or
$(\bz/2\bz)^2$. Suppose on the contrary that $G_0\cong \bz/4\bz$,
generated by $\alpha$. By Lemma~\ref{lem: iso}  both $\alpha$ and
$\alpha^2$ have only isolated singularities. It follows that any
fixed point of $\alpha$ is either of weight $\frac{1}{4}(1,1)$ or of
weight $\frac{1}{4}(1,3)$. Let $k_1$ (resp.~$k_3$) be the number of
$\alpha$-fixed points where the action is of weight
$\frac{1}{4}(1,1)$ (resp.~$\frac{1}{4}(1,3)$).

By Corollary~\ref{cor: G=4 k=2} the quotient $S/G_0$ has only canonical singularities, so $k_1=0$. By the topological Lefschetz fixed point formula
 \eqref{eq: lefschetz} 
 \begin{equation}\label{eq: number of fixed points}
 k_3= \#\,S^\alpha = e(S).
 \end{equation}

 Now we apply the equivariant signature formula to $\alpha$ (cf.~\cite[Equation (12), p.~177]{HZ74}, \cite[1.6]{Cai09}, or \cite[3.3]{Cai12}) and obtain in our case
 \begin{equation}\label{eq: equivariant signature alpha}
   4 \sign (S/\alpha) = \sign (S) +2k_3,
 \end{equation}
   Since $\alpha$ acts trivially on the cohomology we have $  \sign(S/\alpha) = \sign(S) = 0$. By \eqref{eq: equivariant signature alpha} we obtain $k_3=0$.
  This together with \eqref{eq: number of fixed points} yields
$e(S)=0$, a contradiction.

(iii) By Corollary~\ref{cor: G=4 k=2} the Albanese map $a_T\colon T\rightarrow B$ is a genus 2 fibration whose singular fibres are all of type (0). Let $\Sigma:=\proj (a_{T*}\omega_{T/B})\rightarrow B$ be the projectivized relative canonical bundle. By \cite{Hor77} the relative canonical map 
\[
 \xymatrix{
 T\ar[rr]^h\ar[dr]_{a_T} & & \Sigma\ar[dl]^{a_\Sigma}\\
  & B&
 }
\]
is a morphism of degree 2 and its branch curve $D\subset\Sigma$ has at most simple singularities. In fact, the curve $D$ is simple normal crossing by the following Lemma~\ref{lem: D}.

Since every ($-2$)-curve on $T$ is contracted to a singularity of $D$, there is a morphism $\bar\varphi\colon S/G_0\rightarrow \Sigma$ such that the morphism $h$ factors as $\bar\varphi\circ\lambda$ where $\lambda\colon T\rightarrow S/G_0$ is the minimal resolution of singularities. By Lemma~\ref{lem: D} the morphism $\bar\varphi\colon S/G_0\rightarrow \Sigma$ is a flat double cover of $\Sigma$ branched along $D$. The composition $\varphi=\bar\varphi\circ\pi\colon S\rightarrow \Sigma$ is then a finite morphism $\varphi\colon S\rightarrow \Sigma$ of degree 8, branched along $D$.

Over an analytic open subset $U$ of $\Sigma$  around the (nodal) singularities of $D$, the surface $S$ is a disjoint union of two $(\bz/2\bz)^2$-covers of $U$ (cf.~\cite[page~102]{BHPV04}). It is then direct to check that the singular fibres of $a_S\colon S\rightarrow B$ are of the form $2C$ with $C$ a smooth curve of genus 3. This together with the fact that $K_S^2=8\chi(\mo_S)$ guarantees that the surface $S$ is isogenous to a product of curves of unmixed type by \cite[Lemma~5]{Se95}.
\end{proof}

\begin{rmk}
 Using Corollary~\ref{cor: g5} one sees that the eight-to-one finite morphism $\varphi\colon S\rightarrow \Sigma$ in the proof of Theorem~\ref{thm: G=4 k=2} is indeed Galois.
\end{rmk}

\begin{lem}\label{lem: D}
Resume the notation in the proof of Theorem~\ref{thm: G=4 k=2}, and write $D=D_1 + \sum_{1\leq i\leq k}\Gamma_i$, where $D_1$ is horizontal with respect to the ruling $a_\Sigma\colon\Sigma\rightarrow B$ and the $\Gamma_i$'s are $k$ different fibres. Then the following holds.
\begin{enumerate}
 \item[(i)] The induced morphism $a_\Sigma\restr{D_1}\colon D_1\rightarrow B$ is \'etale. As a consequence, $D$ is a simple normal crossing curve.
 \item[(ii)] The morphism $\bar\varphi\colon S/G_0\rightarrow \Sigma$ is a \emph{flat} double covering.
\end{enumerate}
\end{lem}
\begin{proof}
(i) Let $e$ be the maximal integer such that there is a section of $a_\Sigma\colon\Sigma\rightarrow B$, say $\Delta$, with $\Delta^2=-e$. Then $e\geq -1$ (cf.~\cite[V.2]{Har77}). Numerically we can write 
\[
 D\equiv 6\Delta + d\Gamma\text{ and } K_\Sigma\equiv -2\Delta -e\Gamma,
\]
where $d$ is some integer. The arithmetic genus of $D$ depends only on its numerical class:
\begin{equation}\label{eq: paD1}
\begin{aligned}
p_a(D)&=1+\frac{D(D+K_\Sigma)}{2}\\
& = 1+\frac{(6\Delta+d\Gamma)\left(4\Delta+(d-e)\Gamma\right)}{2}\\
&=1+5d-15e.
\end{aligned}
\end{equation}
Note that
\begin{equation*}\label{eq: chi}
\begin{aligned}
\chi(\mo_S) =  \frac{1}{2}K_T^2 = \left(K_\Sigma + \frac{D}{2}\right)^2=\left(\Delta + \left(\frac{d}{2}-e\right)\Gamma\right)^2=d-3e.
\end{aligned}
\end{equation*}
where the first equality is by Corollary~\ref{cor: G=4 k=2}, so we have 
\begin{equation}\label{eq: paD3}
p_a(D)=5\chi(S,\mo_S)+1.
\end{equation}

Now we calculate $p_a(D)$ in another way. Let $\tilde\Sigma \rightarrow \Sigma$ be the minimal embedded resolution of the singularities of $D=D_1 + \sum_{1\leq i\leq k}\Gamma_i$ by blow-ups and $\tilde D\subset\tilde\Sigma$ the (smooth) strict transform of $D$.  Let $\tilde D_1$ and $\tilde \Gamma_i$ be the strict transform of $D_1$ and $\Gamma_i$ on $\tilde\Sigma$ respectively. Then $\tilde D=\tilde D_1 + \sum_{1\leq i\leq k}\tilde \Gamma_i$ and
\begin{equation}\label{eq: paD4}
\begin{aligned}
p_a(D) &= p_a(\tilde D) + \sum_{p\in D_\sing} \delta_p(D)\\
\end{aligned}
\end{equation}
where $D_\sing$ denotes the singular locus of the curve $D$ and, for $p\in D_\sing$, the number $\delta_p(D)$ is a positive integer determined locally by the type of the singularity $p\in D$.

There is a lower bound of $p_a(D)$:
\begin{equation}\label{eq: low bnd paD}
 \begin{aligned}
  p_a(D)  &\geq p_a(\tilde D) +6\chi(\mo_S)\hspace{1cm}(\text{by  \eqref{eq: paD4} and Lemma~\ref{lem: delp}})\\
  &= p_a(\tilde D_1) -k +6\chi(\mo_S)\\
&\geq 1 -k + 6\chi(\mo_S)\hspace{1cm} (\text{since }p_a(\tilde D_1)\geq 1)\\
 \end{aligned}
\end{equation}
Combining \eqref{eq: paD3} and \eqref{eq: low bnd paD} we see that
\begin{equation}\label{eq: k}
k\geq \chi(\mo_S) = d-3e.
\end{equation}

Numerically $D_1\equiv D-k\Gamma \equiv 6\Delta + (d-k)\Gamma$, so  
\begin{equation}\label{eq: D1}
 D_1\leq_\num 6\Delta + 3e\Gamma,
\end{equation}
which means that the divisor $6\Delta + 3e\Gamma-D_1$ is numerically equivalent to an effective $\bq$-divisor.

There are two cases depending on whether the rank two relative canonical bundle  $a_{T*}\omega_{T/B}$ is decomposable or not.

{\bf Case 1.} $a_{T*}\omega_{T/B}$ is decomposable. Then $e\geq 0$. If $e>0$ then there is no reduced curve in the linear system $|6\Delta + a_\Sigma^*\gb|$ for a divisor $\gb$ on $B$ of degree $3e$. Since $D_1$ is reduced, this case does not occur. If $e=0$ then $\Sigma=B\times\bp^1$ and $D_1\leq_\num 6\Delta$. Since $D_1$ is reduced, it is necessarily a union of disjoint 6 sections of $a_\Sigma\colon\Sigma\rightarrow B$. In particular, the induced morphism $a_\Sigma\restr{D_1}\colon D_1\rightarrow B$ is \'etale.

{\bf Case 2.} $a_{T*}\omega_{T/B}$ is indecomposable. Then the invariant $e$ of the corresponding ruled surface $\Sigma$ is $0$ or $-1$. As in \eqref{eq: paD1} we compute 
\begin{equation}\label{eq: paHor}
   p_a(6\Delta + n\Gamma) =1 -15e + 5n\,\text{ for } n\in\bz, \\
\end{equation}
which is less than 1 if $n<3e$. Since the arithmetic genus of any horizontal reduced curve in $\Sigma$ is at least $1$, no such curve is numerically equivalent to $6\Delta + n\Gamma$ for $n<3e$. As a horizontal reduced curve, $D_1$ must be numerically equivalent to $6\Delta + 3e\Gamma$ by \eqref{eq: D1}, so $p_a(D_1)=1$ by \eqref{eq: paHor}. This implies that $a_\Sigma\restr{D_1}\colon D_1\rightarrow B$ is \'etale by the Riemann--Hurwitz formula. 

Having shown that the morphism $a_\Sigma\restr{D_1}\colon D_1\rightarrow B$ is \'etale, the fibres $\Gamma_i$ ($1\leq i\leq k$) must intersect $D_1$ transversally. Therefore $D$ is a simple normal crossing curve.

(ii) By the proof of (i) we infer that $D_1\equiv 6\Delta + 3e\Gamma$ and the inequalities in \eqref{eq: low bnd paD} and \eqref{eq: k} are in fact equalities. Consequently, the equality case of Lemma~\ref{lem: delp} is achieved, so the singular locus of $S/G_0$ surjects onto $D_\sing$. From this the assertion of (ii) follows easily.
\end{proof}

\begin{lem}\label{lem: delp}
With the same notation as in the proofs of Theorem~\ref{thm: G=4 k=2}, (iii) and Lemma~\ref{lem: D} we have 
\begin{equation}\label{eq: delp}
\sum_{p\in D_\sing} \delta_p(D) \geq 6\chi(\mo_S).
\end{equation}
with equality only if the singular locus of $S/G_0$ surjects onto $D_\sing$.
\end{lem}
\begin{proof}
We know by Theorem~\ref{thm: G=4 k=2}, (ii) that $G_0\cong(\bz/2\bz)^2$. Let $\sigma_1,\sigma_2,\sigma_3$ be the three involutions of $G_0$. Since the sets $S^{\sigma_i}$ ($1\leq i\leq 3$) are finite by Lemma~\ref{lem: iso}, they  are pairwise disjoint by Lemma~\ref{lem: Z22 acts}. 

By the topological Lefschetz fixed point formula \eqref{eq: lefschetz} there are $e(S)=4\chi(\mo_S)$ points fixed by each involution $\sigma_i\in G_0$, $ 1\leq i\leq 3$.  The image of $S^{\sigma_i}$ under the quotient map $\pi\colon S\rightarrow S/G_0$ consists of $2\chi$ singularities of type $A_1$, where $\chi:=\chi(\mo_S)$. Resolving those singularities we obtain totally $6\chi$ disjoint ($-2$)-curves $E_l^{(i)}$ on $T$
with $1\leq i\leq 3$ and $1\leq l\leq 2\chi$, where $E_l^{(i)}$ ($1\leq l\leq 2\chi$) lie over the points in $\pi(S^{\sigma_i})$.

Let $\tilde S\rightarrow S$ be the simultaneous blow-up of $S$ at the points of $\bigcup_{1\leq i\leq 3}S^{\sigma_i}$. Then the induced morphism $\tilde S\rightarrow T$ is a bidouble cover, branched exactly along the $ E_l^{(i)}$'s with $1\leq i\leq 3 $ and $1\leq l\leq 2\chi$. Moreover, the stabilizer over the curves $E_l^{(i)}$ is $\sigma_i$. It follows from the theory of bidouble covers \cite{Cat84} that, for $1\leq i< j\leq 3$ the divisor $\sum_{1\leq l\leq 2\chi} (E_l^{(i)}+E_l^{(j)})$ is even, meaning that it is linearly equivalent to $2L$ for some divisor $L$. An even divisor has necessarily an even intersection number with each curve.

Note that each ($-2$)-curve $E_l^{(i)}$ is contracted to some singularity of $D$ under the morphism $h\colon T \rightarrow \Sigma$. So it suffices to show that, over each $p\in D_\sing$, we have
\begin{equation}\label{eq: delp}
\delta_p(D)\geq \#\,\left\{E_l^{(i)}, 1\leq i\leq 3, 1\leq l\leq 2k \,\middle|\, E_l^{(i)}\text{ is contracted to }p\right\}.
\end{equation}

We determine $\delta_p(D)$ according to the type of the singularity $p\in D$ as in the following table:
{\renewcommand{\arraystretch}{1.2} 
\begin{center}
 \begin{tabular}{|c|c|c|c|c|c|c|}\hline
type of $p\in D$ &  $A_n, n\geq 1$ & $D_n, n\geq 4$ & $E_6$ & $E_7$ & $E_8$\\ \hline
$\delta_p(D)$  & $\lfloor \frac{n+1}{2}\rfloor$ & $1+\lfloor \frac{n}{2}\rfloor$ & 3 & $4$ & $4$\\ \hline
\end{tabular}
\end{center}
}

If $p\in D$ is of type $A_n$ with $n$ even or of type $E_n$ with $6\leq n\leq 8$ then there is no non-empty collection of disjoint ($-2$)-curves on $T$, which lie over $p$ and whose sum has an even intersection number with each component over $p$. So in this case the right hand side of \eqref{eq: delp} is $0<\delta_p(D)$.

If $p\in D$ is of type $A_n$  with $n$ odd then there is exactly one non-empty collection of disjoint ($-2$)-curves on $T$, which lie over $p$ and whose sum has an even intersection number with each component over $p$. In terms of the following dual graph of the ($-2$)-curves on $T$ lying over $p$, the non-empty even collection consists of the curves corresponding to the $\circ$'s and has cardinality $\lfloor \frac{n+1}{2}\rfloor=\delta_p(D)$:
\[
 \xymatrix{
 {\circ}\ar@{-}[r]& {\bullet}\ar@{-}[r]&{\circ}\ar@{-}[r]&\ar@{.}[r]&\ar@{-}[r]&{\bullet}\ar@{-}[r] & {\circ}
 }
\]

Similarly, if $p\in D$ is of type $D_n$ with $n\geq 5$ then there is exactly one non-empty collection of disjoint ($-2$)-curves on $T$, which lie over $p$ and whose sum has an even intersection number with each component over $p$. In terms of the following dual graph of the ($-2$)-curves on $T$ lying over $p$, the non-empty even collection consists of the curves corresponding to the $\circ$'s and has cardinality $2<\delta_p(D)$:
\[
 \xymatrix{
& {\circ}\ar@{-}[d]&&&\\
  {\circ}\ar@{-}[r]& {\bullet}\ar@{-}[r]&{\bullet}\ar@{-}[r]&\ar@{.}[r]&\ar@{-}[r]&{\bullet}
 }
\]

If $p\in D$ is of type $D_4$ then there are three non-empty collections of disjoint ($-2$)-curves on $T$, which lie over $p$ and whose sum has an even intersection number with each component over $p$. In terms of the following dual graph of the ($-2$)-curves on $T$ lying over $p$, each of these collections consists of two of the three curves corresponding to the $\circ$'s:
\[
 \xymatrix{
& {\circ}\ar@{-}[d]&&&\\
  {\circ}\ar@{-}[r]& {\bullet}\ar@{-}[r]&{\circ}
 }
\]
so in this case
\[
\#\left\{E_l^{(i)}\,\middle|\, E_l^{(i)}\text{ is contracted to }p\right\}\leq\#\left\{\text{$\circ$'s in the dual graph}\right\}=3=\delta_p(D).
 \]
\end{proof}

\section{Surfaces with quotient not of general type}\label{sec: k=1} Let $S$ be a minimal surface of general type with $q(S)=1$. In this section we assume that the Kodaira dimension of $S/G_0$ is 1, where we write $G_0$ for $\au(S)$. More notation is resumed from the diagram \eqref{diag: resol}. We will let $F$ denote a smooth fibre of the Albanese fibration $a_S\colon S\rightarrow B$, where $B$ is the identified Albanese varieties $\alb(S)$ and $\alb(T)$, see \eqref{diag: alb}.

\subsection{Bounding $|\au(S)|$, part II}\label{sec: bound}
We are going to describe the canonical systems of the surfaces in the diagram \eqref{diag: resol}. For a surface $X$ we use $\omega_X$ and $\mo_X(K_X)$ for the canonical sheaf interchangeably.

Since $\kappa(T)=1$, the Albanese map $a_T\colon T\rightarrow B$ is an elliptic fibration by Lemma~\ref{lem: g alb}. The canonical bundle formula for
relative minimal elliptic fibrations gives
\begin{equation}\label{eq: canonical bundle formula}
\mo_T(K_T) = a_T^* \mathfrak{b} \otimes \mathcal O_T\left(\sum_i(m_i-1)F_i\right)
\end{equation}
where $\mathfrak{b}=(R^1{a_T}_*\mathcal O_T)^\vee$ is an invertible sheaf of degree $\chi(\mo_T)=\chi(\mo_S)$ on $B$  and the $m_iF_i$'s are the multiple fibres of $a_T$.

Since  $\tilde T$ is obtained from $T$ by successively blowing up
smooth points,  there is some effective divisor $A$, supported on the whole exceptional locus of $\eta\colon
\tilde T\rightarrow T$, such that
\begin{equation}\label{eq: adjunction T}
\mo_{\tilde T}(K_{\tilde T})=\mo_{\tilde T}\left(\eta^*K_T+A\right).
\end{equation}
By \eqref{eq: canonical bundle formula} and \eqref{eq: adjunction T} we have
\begin{equation}\label{eq: canonical bundle of tilde T}
 \mo_{\tilde T} (K_{\tilde T})=
 a_{\tilde T}^* \mathfrak{b} \otimes \mathcal O_{\tilde T}
 \left(A+\sum_i(m_i-1)\eta^*F_i\right).
\end{equation}

Let $\tilde R = K_{\tilde S} - \tilde \pi^* K_{\tilde T}$ be
the ramification divisor of $\tilde\pi$. By Lemma~\ref{lem: hol inv quot} there are equations between complete linear systems
\begin{equation}\label{eq: canonical system tilde S}
|K_{\tilde S}| = \tilde\pi^*|K_{\tilde T}| +\tilde R= a_{\tilde S}^*|\mathfrak{b}|
 + \tilde R + \tilde\pi^*A + \sum_i(m_i-1)\tilde\pi^*\eta^*F_i,
\end{equation}
where $a_{\tilde S}\colon \tilde S\rightarrow B$ is the Albanese map of
$\tilde S$.

Since $\rho\colon \tilde S\rightarrow S$ is a composition of
blow-ups at smooth points, it is well known that $|K_S|= \rho_*|K_{\tilde S}|$. So
\eqref{eq: canonical system tilde S} gives
\begin{equation}\label{eq: can sys S}
 |K_S| = \rho_*|K_{\tilde S}| = a_S^*|\mathfrak{b}| + \rho_*\tilde R +
 \rho_*\tilde\pi^*A + \sum_i(m_i-1)\rho_*\tilde\pi^*\eta^*F_i.
\end{equation}
Note that $\rho_*\tilde R$ is just the ramification divisor
 $R=K_S-\pi^*K_{S/G}$ of the quotient map $\pi\colon S\rightarrow S/G$. Every irreducible component of $R$ is fixed (pointwise)
 by some nontrivial element of $G$, hence is smooth.

\begin{rmk}
From \eqref{eq: can sys S} we see that the
Albanese map  $a_S\colon S\rightarrow B$ is induced by the canonical system of $S$ if $p_g(S)>1$ and by the paracanonical system of $S$ if $p_g(S)=1$ (cf.~\cite{CC91}).
\end{rmk}

\begin{nota}\label{nota: HV}
 Set $M=a_S^*\mathfrak{b}$ and $Z=\rho_*\tilde R + \rho_*\tilde\pi^*A
+ \sum_i(m_i-1)\rho_*\tilde\pi^*\eta^*F_i$. 

Then \eqref{eq: can sys S} reads $|K_S|=|M|+Z$. The divisor $M$ is
algebraically equivalent to $\chi(\mo_S)F$, and it moves if
and only if $\chi(\mo_S)>1$. On the other hand, $Z$ always
belongs to the fixed part of $|K_S|$.
 
 We write $Z=H+V$, and $H=n_1\Gamma_1+\dots + n_t
\Gamma_t$ with $n_1\geq\dots\geq n_t$, where $H$ (resp.
$V$) is the horizontal part (resp. the vertical part) of $Z$ with
respect to the Albanese fibration $a_S\colon S\rightarrow B$, and the $\Gamma_i$'s are  the irreducible components of $H$, with  $n_i$ being the  multiplicity of $\Gamma_i$ in $H$. Observe that
\begin{equation}\label{eq: HF}
 2g(F) -2 = K_SF = HF = \sum_{1\leq i\leq t} n_i\Gamma_i F.
\end{equation}
\end{nota}

Obviously, the part $\rho_*\tilde\pi^*A +
\sum_i(m_i-1)\rho_*\tilde\pi^*\eta^*F_i$ of $Z$ is contained in $V$,  so we have $H<\rho_*\tilde R=R$. In particular, its
irreducible components $\Gamma_i$ ($1\leq i\leq t$) are all smooth.
Moreover, $n_i+1$ is the ramification index of the quotient map $S\rightarrow S/G_0$ (equivalently, the order of the stabilizer of the $G_0$-action on $S$) at a general
point of $\Gamma_i$. Since $a_S\rest{\Gamma_i}\colon\Gamma_i\rightarrow B$ is dominant we
have $g(\Gamma_i)\geq g(B)=1$  for all $i$.

\begin{lem}\label{lem: MH}
 $MH=(2g(F)-2)\chi(\mo_S)$.
\end{lem}
\begin{proof}
We compute 
\[
 MH = \chi(\mo_S) FH = \chi(\mo_S) FK_S = (2g(F)-2)\chi(\mo_S).
\]
\end{proof}

\begin{lem}\label{lem: KH}
There are the following bounds on the $K_SH$, the canonical degree of $H$:
\[
 \frac{2g(F)-2}{n_1+1}\chi(\mo_S)+
\sum_{i=1}^t \frac{2n_i^2}{ n_1+1}(g(\Gamma_i)-1) \leq K_S H \leq (11-2g(F)) \chi(\mo_S)
\]
where the second inequality is strict if $g(\Gamma_i)=1$ for some $i$.
\end{lem}
\begin{proof} 
Since $n_1\geq n_i$ for all $1\leq i\leq t$ by assumption, we have
\[
 (n_1K_S+H+V)\Gamma_i \geq (n_1 K_S + n_i \Gamma_i)\Gamma_i \geq
n_i(2g(\Gamma_i)-2).
\]
So
\begin{align*}
(n_1+1)K_S H-MH=(n_1K_S + H +V)H \geq  \sum_{i=1}^t n_i^2(2g(\Gamma_i)-2)
\end{align*}
and the first inequality follows by plugging in the formula of Lemma~\ref{lem: MH}.

We compute further
\begin{multline}\label{eq: low bound K^2}
  K_S^2=K_S(M+H+V)\geq K_SM + K_SH\\ \geq MH + K_SH =(2g(F)-2)\chi(\mo_S)+K_SH.
\end{multline}
Combining this with the Bogomolov--Miyaoka--Yau inequality
$K_S^2\leq 9\chi(\mo_S)$ we obtain $K_S H\leq
(11-2g(F))\chi(\mo_S)$.

Now suppose $g(\Gamma_i) =1$ for some $i$. Containing an elliptic curve, the surface $S$ cannot be a ball quotient (otherwise the elliptic curve will lift to the ball, which is absurd). Hence $K_S^2<9\chi(\mo_S)$ by Yau's result \cite{Y77}. By \eqref{eq: low bound K^2} we infer that the second inequality is strict in this case.
\end{proof}

The following bound on the genus of the fibration $a_S\colon S\rightarrow B$ is in the same spirit of \cite[Sec.~2]{Be79}.
\begin{cor}\label{cor: KH}
 We have $\frac{2g(F)-2}{n_1+1}<11-2g(F)$. In particular, $g(F)\leq 5$.
\end{cor}

By Lemma~\ref{lem: alb quot} we can analyze the action of $G_0$ on $S$ by restricting to a general fibre $F$ of the Albanese fibration $a_S\colon S\rightarrow B$, where the Riemann--Hurwitz formula applies.

Assume that the quotient map $\pi\restr{F}\colon F\rightarrow F/G_0$ is branched at $k$ points, over which the ramification indices are $r_1,\dots,r_k$ respectively.  The following variant of the Riemann--Hurwitz formula will be used repeatedly in the proof of Theorem~\ref{thm: bound k=1}:
\begin{equation}\label{eq: RH}
\frac{2g(F)-2}{|G_0|}=\sum_{i=1}^k\left(1-\frac{1}{r_i}\right),
\end{equation}
the right hand side of which is at least $1$ if $G_0$ is abelian.

\begin{thm}\label{thm: bound k=1}
Let $S$ be a minimal surface of general type with $q(S)=1$ such that $\kappa(S/G_0)=1$, where $G_0:=\au(S)$. Then $|G_0|\leq 4$, and if the equality holds then the Albanese fibration of $S$ has genus $3$.
\end{thm}
\begin{proof}
By Corollary~\ref{cor: KH} we have $g(F)\leq 5$. We distinguish four cases according to the value of $g(F)$.

{\bf Case 1.} $g(F)=5$. We will show that this case does not occur.

 By Corollary~\ref{cor: KH} we have $n_1\geq 8$. By \eqref{eq: HF}
it must hold $n_1=8$, $\Gamma_1F=1$ and $H=\Gamma_1$. So $\Gamma_1$ is a section of $a_S$. By the adjunction formula we have $K_S\Gamma_1+\Gamma_1^2=0$. Since $K_S$ is big and nef the Hodge index theorem implies that $\Gamma_1^2<0$. Hence $\Gamma_1$ is $G_0$-fixed by Lemma~\ref{lem: neg}. Locally around a general point of $\Gamma_1$ the group action takes the form $\sigma(x,y)\mapsto (x, \chi(\sigma)y)$ for $\sigma\in G_0$, where $y=0$ is defining equation of $\Gamma_1$  and $\chi$ is a character embedding $G_0$ into $\bc^*$, so $G_0$ must be cyclic and its order is $n_1+1=9$ by the discussion before Lemma~\ref{lem: MH}. This results in a contradiction to \eqref{eq: RH}.

{\bf Case 2.} $g(F)=4$.  We will show that   $|G_0|\leq3$ in this case.

By  Corollary~\ref{cor: KH} we have $n_1\geq2$, so the ramification index at $\Gamma_1\cap F$ is $r_1=n_1+1\geq3$.
 If $|G_0|\geq 4$ then there are two
possibilities by \eqref{eq: RH}:
\begin{enumerate}
 \item[(a)] $|G_0|=8$, $r_1=4$;
 \item[(b)] $|G_0|=4$, $r_1=r_2=4$.
\end{enumerate}

In the case (a),  let $\gamma$ be the generator of the monodromy around
the branch point $q$ of  $\pi\restr{F}\colon F\rightarrow F/G_0$. Then $\gamma$ has order $r_1=4$. Since $g(F/G_0)=1$,   the fundamental group $\pi_1(F/G_0 \setminus \{q\})$ has a representation $\langle a, b ,c\,|\, aba^{-1}b^{-1}c=1\rangle$ with $a,b$ being generators of $\pi_1(F/G_0)$ and $c$ a small loop around $q$, so that the image of $c$ under the quotient map $\pi_1(F/G_0 \setminus \{q\})\rightarrow G_0$ is $\gamma$.  It follows that $\gamma$ is  a commutator of $G_0$. On the other hand, one sees easily that any commutator of a
group of order $8$ has order at most $2$. So this case is excluded.

In the case (b), since $G_0$ contains elements of order $4$, it must be isomorphic to $\bz/4\bz$. Moreover, $n_i=3$ for all $i$. Then there are two possibilities for $H$ by \eqref{eq: HF}:
\begin{enumerate}
\item[(i)] $H=3(\Gamma_1+\Gamma_2)$ with $\Gamma_1F=\Gamma_2F=1$;
\item[(ii)]  $H=3\Gamma_1$ with $\Gamma_1F=2$.
\end{enumerate}

Let $H_\red$ be  the reduced part of $H$.
 Since $H_\red$ is fixed by $G_0$,
it is smooth and hence in the case (i) the two curves $\Gamma_1$ and
$\Gamma_2$ do not intersect. We claim that
\begin{align}\label{eq: h}H_\red^2<0.\end{align}
Indeed, if $H$ is in the case (i) then, noting that the self-intersection of a
section of $a_S$ is negative (cf.~the proof of {\bf Case 1}), we have 
\[
 H_\red^2 = \Gamma_1^2 + \Gamma_2^2<0.
\]
Now assume  $H$ is in the case (ii). If $\Gamma_1^2\geq0$, then
\[
 K_SH=3(M+3\Gamma_1+V)\Gamma_1\geq 3M\Gamma_1=6\chi (\mo_S),
\]
a contradiction to the second inequality of Lemma~\ref{lem: KH}. This finishes the proof of (\ref{eq: h}).

Let $\sigma\in G_0$ be the involution.  Since each
$\sigma$-fixed curve other than $H_\red$ is contained in fibers of $a_S$ we have, by Lemma~\ref{lem: inv} and (\ref{eq: h}),
\begin{align}\label{eq: g4}K_S^2\leq8\chi (\mo_S)+H_\red^2
< 8\chi (\mo_S).\end{align}

On the other hand, let $X$ be the minimal model of $S/\sigma$
and $a_X:X\to B$ the Albanese fibration. For a general $b\in B$ the fibre $a_X^*b$ is the quotient of $F_b:=a_S^*b$ by $\sigma$.  Since $F_b\to F_b/\sigma$ is ramified exactly at two points, the genus of $a_X$ is $2$ by the Riemann--Hurwitz formula. Thus ${a_X}_*\omega_X$ is a rank two vector bundle of degree $\chi(\mo_S)$ over $B$ and the associated projective bundle $P:=\proj({a_X}_* \omega_X)$ is a ruled surface over $B$. Denote by $e$ the largest number such that there is a section $\Delta$ of $a_P\colon P\rightarrow B$ with $\Delta^2=-e$.
 We have (cf.~\cite[page~7]{X85b}):
 \begin{align}\label{rule}
e=\max \{2\deg\mathcal L - \deg {a_X}_*\omega_X\mid \mathcal
L\subset {a_X}_*\omega_X\text{ is a sub-line-bundle}\}.\end{align}

Since  $a_X\colon X\rightarrow B$ is the composition of the induced rational map $X\dashrightarrow T$ with $a_T\colon T\rightarrow B$, the rank two vector bundle ${a_X}_*\omega_X$ contains the line bundle ${a_T}_*\omega_T$.   Note that $\deg{a_T}_*\omega_T= \chi(\mo_T) =\chi (\mo_S)$, so we have by (\ref{rule}) $$e\geq 2\deg  {a_T}_*\omega_T- \deg {a_X}_*\omega_X=\chi (\mo_S).$$
Therefore, by \cite[Thm~2.2, (ii)]{X85b}, we have
\begin{equation}\label{eq: x}
 K_X^2 \geq \chi(\mo_X) + 3 e \geq 4\chi(\mo_X).
 \end{equation}

Since $a_X$ has genus $ 2$, it follows that $X$ is of general type
by Lemma~\ref{lem: g alb}. By Proposition~\ref{prop: X94} and (\ref{eq: x}), we obtain
$$K_S^2\geq 2K_X^2\geq8\chi (\mo_X)=8\chi (\mo_S),$$
which is a contradiction to \eqref{eq: g4}. Thus we exclude the
case (b).

{\bf Case 3.} $g=3$.  We assume $|G_0|>4$.  Then $G_0\cong D_6$, $D_8$, or $Q_8$ by \eqref{eq: RH}. If $G_0\cong D_6$ or $D_8$, by the proof of \cite[Claim 3.8]{Cai04}, we have
\[
 K_S^2=\frac{8}{3} \chi(\mo_S)-\frac{32}{3}(g(B)-1)= \frac{8}{3}\chi(\mo_S),
\]
a contradiction to (\ref{eq: low bound K^2}); if $G_0\cong Q_8$, by the proof of \cite[Claim 3.7]{Cai04}, we have 
\[
 K_S^2 =  3 \chi(\mo_S)+10(g(B)-1)=3 \chi(\mo_S),
\]
again a contradiction to (\ref{eq: low bound K^2}).\footnote{Note that in the proofs of \cite[Claims~3.7 and 3.8]{Cai04} one does not need any condition on $\chi(\mo_S)$ as required by the main theorem of \cite{Cai04}.} So in the case $g=3$ we
have $|G_0|\leq4$.

{\bf Case 4.} $g=2$.  In this case we have $|G_0|\leq2$ by \eqref{eq: RH}.

This finishes the proof of Theorem~\ref{thm: bound k=1}.

\end{proof}

\subsection{Surfaces with $|\au(S)|=4$, part II}\label{sec: isog}
In this subsection we will prove the following theorem.
\begin{thm}\label{thm: isog k=1}
Let $S$ be a minimal surface of general type with $q(S)=1$ such that $\kappa(S/G_0)=1$ and $|G_0|=4$, where we denote by $G_0$ the group $\au(S)$. Then $S$ is isogenous to a product of curves of unmixed type.
\end{thm}

We need some preparation for the proof of Theorem~\ref{thm: isog k=1}, which will be given in the end of this subsection.

By Theorem~\ref{thm: bound k=1} the Albanese map
$a_S: S\to B$ has genus $3$. By \eqref{eq: RH} there are exactly 2 branch points of the quotient map $\pi\restr{F}\colon F\rightarrow F/G_0$ and the ramification indices thereover are both 2. Therefore the horizontal part $H$ of the divisors from $|K_S|$ is a reduced curve with $HF=2g(F)-2=4$.

Let  $\gamma_1$, $\gamma_2$ be the stabilizers over the two branch points of $\pi\restr{F}\colon F\rightarrow F/G_0$. Looking at the monodromy we see that $\gamma_1\gamma_2=\id_F$ and hence $\gamma_1=\gamma_2$. Denote by $\sigma$ the $\gamma_i$,
$i=1,2$. Then $H$ is $\sigma$-fixed. It is also easy to see that
\begin{equation}\label{eq: genus F/sig}
 g(F/\sigma) = g(F/G_0)=1.
\end{equation}

\begin{lem}\label{lem: V=0}
$\mathrm{(i)}$ $K_S^2 = 8 \chi(\mo_S) + H^2$; $\mathrm{(ii)}$ $V=0$.
\end{lem}
\begin{proof}
We compute
\begin{equation}\label{eq: g30}
\begin{split}
  K_S^2 &= K_S(M+H+V) \\
  &= MH+K_SH +K_SV\\
  &=MH+(M+H+V)H +K_SV\\
   &= 8\chi(\mo_S) + H^2+(H+K_S)V\hspace{1cm}(\text{by Lemma }\ref{lem: MH})\\
&\geq 8\chi(\mo_S) + H^2.
\end{split}
\end{equation}
Since each $\sigma$-fixed curve other than $H$ is contained in fibers of $a_S$, we have by Lemma~\ref{lem: inv} 
\begin{align}\label{eq: g31} K_S^2\leq8\chi (\mo_S)+H^2.\end{align}
Combining  \eqref{eq: g30} with \eqref{eq: g31} we obtain
\begin{align}
 \label{eq: g32}&K_S^2=8\chi (\mo_S) +H^2,\\
 \label{eq: g32'}&(H+K_S)V=0.
\end{align}
 From (\ref{eq: g32'}) follows $HV=0$ and hence  $(M+H)V=0$. This implies $V=0$ since effective canonical divisors are $2$-connected  (\cite[VII, Prop.~6.2]{BHPV04}).
\end{proof}
\begin{cor}\label{cor: V=0}
The curve $H$ is the only curve that is fixed by a nontrivial automorphism in $G_0$.
\end{cor}
\begin{proof}
If there were another curve $C$ that is fixed by a nontrivial automorphism in $G_0$ then $C\leq V$, which is a contradiction to Lemma~\ref{lem: V=0}, (ii).
\end{proof}

 \begin{cor}\label{cor: X  T min}
The minimal resolution $\tilde T$ of $S/G_0$ in \eqref{diag: resol} is minimal, that is, it does not contain any $(-1)$-curves. As a consequence $\tilde T$ does not contain any ($-4$)-curves.
 \end{cor}
\begin{proof}
If $\tilde T$ is not minimal then there is a $(-1)$-curve
$E$ on it, which is necessarily not contracted by $\lambda$. The curve $\lambda(E)\subset S/G_0$ is then pulled back to be a fixed part of $|K_S|$, which is contained in $V$. This is a contradiction to Lemma~\ref{lem: V=0}.

For the second statement note that a relatively minimal elliptic fiberation cannot contain any ($-4$)-curves in the fibres 
 for example by Kodaira's classification of singular elliptic fibres (\cite[V.7]{BHPV04}).
\end{proof}

Now let $\mu\colon H\to B$ be the restriction of $a_S$
to $H$. Then $\mu$ is a finite map  of degree 4. We remark that
the degree and the ramification divisor of $\mu$ make sense even
when $H$ is not connected.

Let $R=K_H -\mu^* K_B$ be the ramification divisor of the four-to-one morphism $\mu\colon H\rightarrow B$. We may write $R=\sum_{b\in B} R_b$ where $R_b$ is the part of $R$ over $b\in B$. By the Riemann--Hurwitz formula and the adjunction formula, 
 \begin{align}\label{r0} \deg R=H^2+HK_S.\end{align}
\begin{lem}\label{lem: F_b}
For each point $p\in S$, if  $p\leq R$ as effective divisors, then $F_b:=a_S^*b$ is singular
at $p$, where $b=a_S(p)$.
\end{lem}
\begin{proof}
There are local coordinates $(x,y)$ such that
\[
 \sigma(x,y) = (x,-y).
\]
Moreover,  we may assume $H$ is locally defined by $y=0$ and $F_b$ by
$$c_1 x + c_2x^2+ c_3y^2 + c_4 x y^2 + \text{higher order terms} =0$$
where $c_i\in \bc$ are constants. The assumption implies that the intersection number of $H$ and $F_b$ at $p$ is at least 2, so we have $c_1 =0$ and the lemma follows.
\end{proof}

\begin{lem}\label{lem: bou sin fib}
For each branch point $b\in B$ of $\mu$, let $\epsilon(F_b)=e(F_b)+4$ be the topological defect of the fibre $F_b:=a_S^*b$, see Appendix~\ref{sec: top def}. Then we have $\epsilon(F_b)\geq \deg R_b$, and equality holds only if $R_b=p+q$ with $p\neq q$. 
\end{lem}
\begin{proof}
We distinguish the two cases $G_0\cong(\bz/2\bz)^2$ and $G_0\cong\bz/4\bz$.

{\bf Case 1. $G_0\cong(\bz/2\bz)^2$.} We show that the morphism $\mu\colon H\rightarrow B$ is a bidouble cover. By Lemma~\ref{lem: g3} the fibration $a_S\colon S\rightarrow B$ is hyperelliptic. Let $\tau$ be the hyperelliptic involution, which is necessarily not in $G_0$. Let $G$ be the subgroup of  $\aut(S)$ generated by $\tau$ and $G_0$. Since the hyperelliptic involution of a curve of genus at least $2$ commutes with all of its automorphisms, we have $G\cong(\bz/2\bz)^3$.

Denote by $G_H$ the the image of $G$ in $\aut(H)$, which is isomorphic to $(\bz/2\bz)^2$. Since $\mu\colon
H\rightarrow B$ has degree 4 and factors through the quotient map $H\rightarrow H/G_H$ which also has degree 4, the two maps coincide. In particular, $\mu$ is Galois with Galois group
$G_H\cong(\bz/2\bz)^2$. 

It follows that, for each branch point $b\in B$
of $\mu$, the inverse image $\mu^{-1}(b)$ consists of two points, say $p$ and $q$, and we have $R_b=p+q$. By Lemma~\ref{lem: F_b}, the fibre $F_b$ is singular at both $p$ and $q$. On the other hand, if $\epsilon(F_b)\leq1$ then  $F_b$ has at most one
singular point by Lemma~\ref{lem: e_f=1} for $g=3$. So we have $\epsilon(F_b)\geq2=\deg R_b$.

{\bf Case 2. $G_0\cong\bz/4\bz$.} Since the restriction $\alpha_H:=\alpha\rest{H}$ is an involution of $H$ and $\mu: H\to B$ factors through the quotient map $H\to H/\alpha_H$, $R_b$ is of the form either $p$, $p+q$ ($p\not= q$) or $3p$. Now the lemma follows from the statements below which we will prove case by case: 
\begin{enumerate}
 \item[(i)] if $R_b=p$ then $\epsilon(F_b)>1$;
\item[(ii)] if $R_b=p+q$ then $\epsilon(F_b)\geq 2$;
 \item[(iii)] if $R_b=3p$ then $\epsilon(F_b)>3$.
\end{enumerate}

(i) If $R_b=p$ then the point $p$ is $\alpha$-fixed. Since the curve $H$ is $\sigma$-fixed, the action of $\alpha$ at   $p\in S$ is of weight $\frac{1}{ 4}(1, 2)$.
By Lemmata~\ref{lem: F_b} and \ref{lem: fixed}, $p$ is neither a smooth  point nor an ordinary  node of $F_b$. So $F_b$ cannot be as in Lemma~\ref{lem: e_f=1} and we have $\epsilon(F_b)\geq2$.

(ii) If $R_b=p+q$ then $p$ and $q$ are two singular points of $F_b$ by Lemma~\ref{lem: F_b}. Therefore $F_b$ cannot be as in Lemma~\ref{lem: e_f=1} for $g=3$ and we have $\epsilon(F_b)\geq2=\deg R_b$.

(iii) Since $R_b=3p$, it follows from the fact $HF_b=4$ that $H\cap F_b =\{p\}$ and the intersection number of $H$ and $F_b$ at $p$
is $4$.  

We look at the action of $\alpha$ around $p\in S$ which is necessarily of type $\frac{1}{4}(1,2)$. There are suitable local coordinates $(x,y)$ of $S$ around $p$ such that $H\subset S$ is defined by $x=0$ and $\alpha$ acts as $\alpha(x,y)=(\pm\sqrt{-1} x, -y)$.

Let $t$ be a local coordinate of $B$ around the point $b$. Then, as a holomorphic function around $p$, the pull-back $a_S^*t$ is invariant under the action of $\alpha$ and takes the following form in local coordinates:
\begin{align}\label{f^t}
 a_S^*t=c_1y^2 + c_2x^2y + c_3 y^4 + c_4x^4  + \text{ higher order terms},
\end{align}
where $c_i\in\bc$ are constants. Since the intersection number of $H$ and $F_b$ at $p$ is $4$, we have $c_1=0$ and hence the multiplicity $\mu_p(F_b)\geq 3$.

 On the other hand, let $F_\red$ be the reduced part of $F_b$. Then we have by Lemma~\ref{lem: delta Fb}
$$
\epsilon(F_b)=\epsilon(F_\red)+2p_a(F_b)-2p_a(F_\red)=\epsilon(F_\red)+6-2p_a(F_\red).$$
If $\epsilon(F_b)\leq 3$ then we have either $p_a(F_\red)=2$
and $\epsilon(F_\red)\leq1$ or $p_a(F_\red)=3$ and $\epsilon(F_\red)\le 3$. 

In the first case $F_\red$ cannot be smooth, otherwise $F_b=2F_\red$ has multiplicity two at any points, contracting the fact that $\mu_p(F_b)\geq 3$. It follows that $\epsilon(F_\red)=1$ and hence $F_\red$ has a unique node $p$ as singularity by Lemma~\ref{lem: delta reduced}. In particular, $F_b$ has at most two components. We claim that $F_b=mF_\red$ for some positive integer $m$. If $F_b$ is irreducible this is clear. Otherwise $F_b$ has two components, say $C_1$ and $C_2$. Since $F_bC_1=F_bC_2=0$ and $C_1C_2=1$, the multiplicities of $C_1$ and $C_2$ are necessarily the same.  Given $p_a(F_b)=3$ and $p_a(F_\red)=2$, we see that $F_b=2F_\red$. In terms of local coordinates $(x,y)$ around $p$ above:
\[
a_S^*t=\left((ax+by)(cx+dy)+\textrm{terms of higher  order}\right)^2 
\]
 for some $a, b, c, d\in\bc$ with $ad-bc\not=0$. This is a contradiction to (\ref{f^t}). 
 
In the second case $F_b=F_\red$ is reduced. Since
$\mu_p(F_b)\geq3$, we have by Lemma~\ref{lem: delta  reduced} that $\epsilon(F_b)\geq  4 >3$.

\end{proof}

\begin{prop}\label{prop: 8X}
 $K_S^2=8\chi(\mo_S)$, or equivalently $e(S)=4\chi(\mo_S)$.
\end{prop}
\begin{proof}
First assume $G_0\cong(\bz/2\bz)^2$. Note that the curve $H$ is $\sigma$-fixed and the other involution $\sigma_1$ and $\sigma_2$ of $G_0$ do not fixed any curves (Corollary~\ref{cor: V=0}). By Lemma~\ref{lem: inv}, applied to the involution $\sigma_1$ or $\sigma_2$, we have the equality $K_S^2=8\chi (\mo_S)$ .

Now assume $G_0=\langle\alpha\rangle\cong \bz/4\bz$. Note that $H$ is  $\sigma$-fixed but not $\alpha$-fixed. Applying the equivariant signature formula to $\alpha$ (\cite[1.6]{Cai09}), we have
\begin{align}\label{eq: equiv sign 1}
4\op{Sign}(S/\alpha)=\op{Sign}(S)+H^2+\sum_{p\in
S}\op{def}_p(S,\alpha),
\end{align}
where 
\[
\op{def}_p(S,\alpha) =
 \begin{cases}
 2 & \text{ if $\alpha$ has weight $\frac{1}{4}(1,3)$ at } p\in S,\\
 -2 & \text{ if $\alpha$ has weight $\frac{1}{4}(1,1)$ at } p\in S,\\
0 & \text{ otherwise.}
 \end{cases}
\]
Since $\sigma$  acts trivially on $H^2(S, \br)$, we infer that $\op{Sign}(S/\sigma)=\op{Sign}(S)$ and hence by \eqref{eq: equiv sign 1}
\begin{align}\label{eq: equiv sign 2}
3\left(K_S^2-8\chi (\mo_S)\right)=3\op{Sign}(S)=H^2+2(k_3-k_1),
\end{align} 
where $k_a$ ($a=1, 3$) is the number of isolated $\alpha$-fixed points of weight $\frac{1}{ 4}(1, a)$.
 
Recall that $\lambda\colon\tilde T\rightarrow S/G_0$ is the minimal resolution (cf.~\eqref{diag: resol}). Every fixed point of $\alpha$ of weight $\frac{1}{4}(1,1)$ results in a $(-4)$-curve on $\tilde T$, which should not happen by Corollary~\ref{cor: X  T min}. This implies that $k_1=0$. Combined with Lemma~\ref{lem: V=0} and \eqref{eq: equiv sign 2}, we obtain 
\begin{equation}\label{eq: H2=k3}
 H^2 = k_3\geq0.
\end{equation}

By Lemma~\ref{lem: bou sin fib} we have
\begin{equation}\label{eq: deg r 1}
\deg R=\sum_{b\in B}\deg R_b\leq\sum_{b\in B}\epsilon(F_b)=e(S),
\end{equation}
where $\epsilon(F_b)$ denotes the topological defect of the fibre $F_b$ and the last equality follows from Lemma~\ref{lem: top Euler}. By Lemma~\ref{lem: MH}, \eqref{r0} and Lemma~\ref{lem: V=0} we have
\begin{equation}\label{eq: deg r 2}
 \deg R = HK_S + H^2 =MH+ 2H^2 =4\chi(\mo_S) + 2H^2 =e(S) + 3H^2.
\end{equation}
Combining \eqref{eq: H2=k3}, \eqref{eq: deg r 1} and \eqref{eq: deg r 2} we obtain $H^2=0$. Hence  $K_S^2=8\chi(\mo_S)$ by Lemma~\ref{lem:
V=0}.

The equivalence of the two equalities of the proposition follows from the Noether formula $12\chi(\mo_S)=K_S^2 + e(S)$.
\end{proof}

\begin{proof}[Proof of Theorem~\ref{thm: isog k=1}]
By Lemma~\ref{lem: V=0} and Proposition~\ref{prop: 8X}, we have $H^2=0$ and $ K_SH=(M+H)H= MH =4\chi (\mo_S)$.
Combined with (\ref{r0}) we obtain $\deg R=4\chi (\mo_S)=e(S)$.

In view of \eqref{eq: deg r 1} the inequality in Lemma~\ref{lem: bou sin fib} becomes an equality for any point $b\in B$ and in this case  $\deg R_b = \epsilon(F_b)=2$ holds for any singular fibre $F_b=a_S^*b$. Thus the singular fibres of $a_S$ land in the list of Lemma~\ref{lem: e_f=2} for $g=3$.

Write $S^\sigma=H\cup I$ where $I$ is a finite subset of $S^\sigma$ not intersecting $H$. Then, setting $I_b:=I\cap F_b$,
\begin{equation}\label{eq: e Fix}
 e\left(S^\sigma\right)=e(H)+e(I)=\sum_{b\in B}\left(\#\,I_b - \deg R_b\right).
\end{equation}

\medskip

\noindent{\bf Claim.} For a singular fibre $F_b$ in Lemma~\ref{lem: e_f=2} it holds
\begin{equation}\label{eq: Fb}
 \#\,I_b - \deg R_b\leq \deg R_b
\end{equation}
with equality only if $F_b=2C$ with $C$ a smooth curve of genus 2.\begin{proof}[Proof of the claim]
Since $\deg R_b=2$ it is equivalent to proving 
\begin{equation}\label{eq: leq4}
  \#\,I_b\leq 4.
\end{equation}

The fibre $F_b$ is singular at the points of $I_b$ by Lemma~\ref{lem: fixed}. On the other hand, a fibre of type (ii)-(vi) in Lemma~\ref{lem: e_f=2} is reduced and has at most 2 singularties, so the strict inequality of \eqref{eq: leq4} holds. If $F_b$ is a singular fibre of type (i), i.e., $F_b=2C$ with $C$ a smooth curve of genus 2, then $\#\,F_b^\sigma\leq 6$ and hence 
\[
 \#\,I_b=\#\,F_b^{\sigma}-\#\,H\cap F_b\leq 4.
\]
\end{proof}

Plugging \eqref{eq: Fb} into \eqref{eq: e Fix} we obtain
\[
e(S)= e(S^\sigma)=\sum_{b\in B}\left(\#\,I_b - \deg R_b\right)\leq \sum_b \deg R_b = e(S).
\]
Therefore the inequality in the claim becomes an equality for any singular $F_b$ and we infer that $F_b=2C$ where $C$ is a smooth curve of genus 2.

Since $K_S^2=8\chi(\mo_S)$ by Proposition~\ref{prop: 8X}, we can conclude that $S$ is a surface isogenous to a product of unmixed type by \cite[Lemma~5]{Se95}.
\end{proof}

\section{Examples}\label{sec: examples}
In this section  we construct explicitly irregular surfaces $S$ of
general type  with $|\au(S)|=3$ and $4$. The examples of surfaces with $|\au(S)|=4$  are quite exhaustive since they include (compare Theorems~\ref{thm: G=4 k=2} and \ref{thm: bound k=1}):
\begin{itemize}
 \item surfaces with any positive geometric genus,
 \item surfaces with $g(a_S)=5$ and $\au(S)\cong(\bz/2\bz)^2$,
 \item surfaces with $g(a_S)=3$ and $\au(S)\cong(\bz/2\bz)^2$, and
 \item surfaces with $g(a_S)=3$ and $\au(S)\cong\bz/4\bz$.
\end{itemize}
For the examples of surfaces with $|\au(S)|=3$ the genus of the Albanese fibration is 4.

Examples~\ref{ex: 1}, \ref{ex: 2} and \ref{ex: Z3} take advantage of the construction of surfaces of general type with $p_g(S)=0$ in \cite{BC04}. 

In Examples~\ref{ex: 1} and \ref{ex: 2} we take the group $G\cong (\bz/2\bz)^3$ together with one of the two $G$-coverings $C\rightarrow \bar C\cong\bp^1$ in \cite[3.1]{BC04} and then construct a suitable $G$-covering $D\rightarrow \bar D$ over an elliptic curve $\bar D$. Our surfaces are then $S=(C\times D)/\Delta_G$ and $\au(S)$ turns out to be a subgroup of $(G\times G)/\Delta_G$ which has an induced action on $S$. Here $\Delta_G$ is the diagonal of $G\times G$.

Via a similar procedure, applied to the one of the two $(\bz/3\bz)^2$-coverings $C\rightarrow \bar C\cong\bp^1$ in \cite[3.3]{BC04} together with another $(\bz/3\bz)^2$-covering $D\rightarrow \bar D$ with $\bar D$ being an elliptic curve, we construct irregular surfaces with $\au(S)\cong\bz/3\bz$ in Example~\ref{ex: Z3}.

It is not clear if one can use the two equivalent $(\bz/2\bz)^4$-coverings in \cite[3.2]{BC04} to construct irregular surfaces with $\au(S)\cong(\bz/2\bz)^2$ in the same way. The $(\bz/5\bz)^2$-coverings in \cite[3.4]{BC04} do not work out, as is predicted by our bound $|\au(S)|\leq 4$. 

Example~\ref{ex: 3} with $\au(S)\cong\bz/4\bz$ does not fall into the pattern of the other examples. There the surfaces are still of the form $(C\times D)/\Delta_G$, as they should be. However, the group $\au(S)$ is not contained in $(G\times G)/\Delta_G$ any more.

The following result on the cohomology representation of the
group of automorphisms of a curve will be used in Examples \ref{ex: 1}, \ref{ex: 2} and \ref{ex: Z3}.

\begin{lem}\label{lem: non0 es}
Let $C$ be a smooth curve of genus $g(C) \geq 2$ and $G$ a finite abelian group of automorphisms of $C$. 
\begin{enumerate}
 \item[(i)] Assume $g(C/G) =1$. Then, for any $\chi\in\widehat G$, $H^1(C,
 \bc)^\chi \neq 0$ if and only if $\chi(\sigma)\neq 1$ for some stabilizer $\langle\sigma\rangle$ over a point of $C/G$.
 \item[(ii)] Assume $g(C/G)=0$. Then, for any $\chi\in\widehat G$, $H^1(C,
 \bc)^\chi \neq 0$ if and only if there are stabilizers $\langle\sigma_1\rangle,\langle\sigma_2\rangle,\langle\sigma_3\rangle$ over 3 distinct points of $C/G$ such that $\chi(\sigma_i)\neq 1$ for $1\leq i\leq 3$.
\end{enumerate}
\end{lem}
\begin{proof}
 This is a consequence of \cite[Proposition~2]{B87} or \cite[p.~244]{B91}.
\end{proof}

\subsection{Examples of irregular surfaces with $|\au(S)|=4$}
\begin{ex}[$\au(S)\cong(\bz/2\bz)^2$ and $g(a_S)=5$]\label{ex: 1}
We take the group $G=\langle e_1,e_2,e_3\rangle\cong\mathbb (\bz/2\bz)^3$. Let $\bar C,\bar D$ be two smooth curves of genera $g(\bar C)=0,\,g(\bar D)=1$ respectively. 

By Riemann's existence theorem there is a $G$-covering
$C\rightarrow \bar C$ with 6 branch points, over which the stabilizers are $ \langle e_1\rangle,\langle e_1\rangle,\langle e_2\rangle,\langle e_2\rangle, \langle e_3\rangle,\langle e_3\rangle$ respectively. Similarly, there is a $G$-covering $D\rightarrow \bar D$ with $2r$ branch points, over which the stabilizers are all $\langle e_1+e_2+e_3 \rangle$.

Consider the product action of $G\times G$ on $C\times D$. Since
\[
 \langle e_1\rangle\cap \langle e_1+e_2+e_3\rangle=\langle e_2\rangle\cap \langle e_1+e_2+e_3 \rangle=\langle e_3\rangle\cap \langle e_1+e_2+e_3\rangle =\{0\},
\]
the induced action of the diagonal subgroup $\Delta_G\subset G\times G$ on $C\times D$ is free. Therefore $S:=(C\times
D)/\Delta_G$ is a surface isogenous to a product of unmixed type. One calculates easily $ g(C)=5 \text{ and } g(D)=4r+1$ by Hurwitz's formula. So 
\[
 K_S^2=\frac{8}{|G|}(g(C)-1)(g(D)-1)=16r\text{ and } \chi(\mo_S) =\frac{1}{8}K_S^2 = 2r
\]
and our surfaces form an infinite series as $r$ varies. The irregularity of $S$ is $q(S)=g(\bar C)+g(\bar D)=1$. The Albanese map of $S$ is the induced fibration $S\rightarrow \bar D$ and has fibre genus $g(C)=5$.

Consider the character $\chi$ of $G$ such that $\chi(e_1)=\chi(e_2)=\chi(e_3)=-1$. By Lemma~\ref{lem: non0 es} this is the only character
$\chi$ satisfying the following conditions:
\[
H^1(C,\mathbb C)^\chi\neq 0 \text{ and }H^1(D,\mathbb
C)^{\bar\chi}\neq 0.
\]
Then, by the expression of $H^2(S,\bc)$ in \cite[(4.5.2)]{CLZ13}, $\ker(\chi)$ acts trivially on $H^2(S,\bc)$. One also sees easily that $\ker(\chi)$ acts trivially on $H^1(S,\bc)$, so it is in fact a subgroup of $\au(S)$.

Now we calculate: $\ker(\chi)=\langle e_1+e_2,e_1+e_3\rangle\cong(\bz/2\bz)^2$. On the other hand,  it holds $|\au(S)|\leq 4$ by Theorem~\ref{main}. Hence 
\[
 \au(S)=\ker(\chi)\cong (\bz/2\bz)^2.
\]
\end{ex}
\begin{rmk}
As is pointed out by a referee, the curve $C$ in Example~\ref{ex: 1} is the so-called Kummer covering of the rational curve $\bar C$ of type $(2,2,2)$, defined by the homogeneous equations 
\[
z_1^2=Q_1(x,y),\,z_2^2=Q_2(x,y),\,z_3^2=Q_3(x,y)
 \]
where $Q_i(x,y)$ are quadratic polynomials in $x,y$ for $1\leq i\leq 3$. Its quotient by $\ker(\chi)$ is the genus 2 curve defined by the weighted homogeneous equation 
\[
z^2=Q_1(x,y)Q_2(x,y)Q_3(x,y). 
\]
The other curve $D$ is the normalization of the fibre product $D_1
\times_{\bar D} D_2$ where $D_1\rightarrow \bar D$ is an isogeny of elliptic curves of degree 2 and $D_2\rightarrow \bar D$ is a double covering with the same branch locus as $D\rightarrow \bar D$.
\end{rmk}

\begin{ex}[$\au(S)\cong(\bz/2\bz)^2$ and $g(a_S)=3$]\label{ex: 2} The construction is similar to Example~\ref{ex: 1}. Let $G=\langle e_1,e_2,e_3\rangle\cong(\bz/2\bz)^3$, and let $\bar C,\ \bar D$ be two smooth curves of genera $g(\bar C)=0,\,g(\bar D)=1$ respectively. 

We can construct by the Riemann existence theorem a $G$-covering $C\rightarrow \bar C$ with 5 branch points, over which the stabilizers are $\langle e_1\rangle,\langle e_1\rangle,\langle e_2\rangle,\,\langle e_3\rangle,\,\langle e_2+e_3\rangle$ respectively and another $G$-covering $D\rightarrow \bar D$ with $2r$ branch points,  over which the stabilizers are all $\langle e_1+e_3\rangle$.

Consider the product action of $G\times G$ on $C\times D$. Since
\[
 \langle e_1+e_3\rangle\cap \langle e_1\rangle=\langle e_1+e_3\rangle\cap \langle e_2\rangle=\langle e_1+e_3\rangle\cap \langle e_3\rangle = \langle e_1+e_3\rangle\cap \langle e_2+e_3\rangle =\{0\},
\]
the induced action of the diagonal subgroup $\Delta_G\subset G\times G$ on $C\times D$ is free, and hence $S:=(C\times
D)/\Delta_G$ is a surface isogenous to a product of unmixed type. By Hurwitz's formula one computes $g(C)=3$ and $g(D)=4r+1$. So
\[
 K_S^2=\frac{8}{|G|}(g(C)-1)(g(D)-1)=8r \text{ and }\chi(\mo_S) =\frac{1}{8}K_S^2= r.
\]
The irregularity of $S$ is $q(S)=g(\bar C)+g(\bar D)=1$. The Albanese map $a_S$ is the induced fibration $S\rightarrow \bar D$ and hence has fibre genus $g(C)=3$.

The character $\chi$ of $G$ with $\chi(e_1)=\chi(e_2)=\chi(e_1+ e_3)=-1$ is the only one satisfying the following conditions:
\[
H^1(C,\mathbb C)^\chi\neq 0 \text{ and }H^1(D,\mathbb
C)^{\bar\chi}\neq 0.
\]
Using the same argument as in Example~\ref{ex: 1} we infer that
\[
\au(S)=\ker(\chi)\cong (\bz/2\bz)^2.
\]
\end{ex}

\begin{rmk}
The genus 3 curve $C$  in Example~\ref{ex: 2} is hyperelliptic by Lemma~\ref{lem: g3}. A referee writes down its affine equation as follows:
\[
y^2= (x^4+ax^2+1)(x^4+bx^2+1) \text{ with } a,b\in\bc\setminus\{\pm 2\}.
\]
\end{rmk}

In  Examples \ref{ex: 1} and \ref{ex: 2}, the group $\au(S)$ is contained in $(G\times G)/\Delta_G$, viewed as a subgroup of $\aut(S)$. But this is not
the case in the following example.

\begin{ex}[$\au(S)\cong\bz/4\bz$ and $g(a_S)=3$]\label{ex: 3} 
This time take the group $G = \left<e_1,e_2\right>\cong(\bz/2\bz)^2$. Write $e_3:=e_1+e_2$. For $j=1$, $2$, $3$, let $\chi_j$ be the character of $G$  with $\ker(\chi_j)=\left<e_j \right>$ and $\chi_0$ the character of the principle representation. For any $0\leq j\leq 3$, since $\chi_j$ takes values in $\{1,-1\}$, we have $\chi_j=\bar\chi_j$. 

Let $C$ be a hyperelliptic curve whose affine equation is
$$y^2=(x^4+1)(x^4+a),\ \ \ a\in \mathbb{C}\setminus\{0, 1\}.$$
The hyperelliptic involution $\tau$ acts by $(x,y)\mapsto
(x,-y)$. There is another automorphism $\gamma$ of $C$ given by $(x,y)\mapsto
(\sqrt{-1}x,y)$.  The 1-forms $\omega_j:=\frac{x^jdx}{y}$ ($j=0, \ 1,\
2$) constitute a basis of $H^0(C, \Omega_C^1)$ and we have $\gamma^*\omega_j=\sqrt{-1}^{j+1}\omega_j$.

There is an action of $G$  on $C$ such that $e_1$ acts as $\tau$ and $e_2$ acts as $\gamma^2$. It is easy to see that $e_3$ acts freely on $C$, so $g(C/e_3)=2$ by the Riemann--Hurwitz formula. Moreover, $g(C/e_2)=1$ and $g(C/e_1)=0$. 

The nonzero eigenspaces of the $G$-action on $H^1(C,\bc)$ are as follows:
\begin{equation}\label{eq: eigen C}
 H^1(C,\bc)^{\chi_3}=\bigoplus_{j=2,3}\left(\bc\omega_j\oplus\bc \bar\omega_j\right), \,H^1(C,\bc)^{\chi_2}=\bc\omega_1\oplus\bc \bar\omega_1.
\end{equation}

Now let $\bar D$ be an elliptic curve and  $\delta_1$ and
$\delta_2$ two non-isomorphic  invertible sheaves of degree $r$
($r>0$) such that $\delta_1^{\otimes2}\sim\delta_2^{\otimes2}$.
Let $B\in |\delta_1^{\otimes2}|$ be a reduced divisor and $\pi_i
\colon D_i\to \bar D$ the double cover defined by the data $(B, \delta_i)$.  We
have a commutative diagram
 \begin{center}
  \begin{tikzpicture}
      \node (D) at (-3,2) {$D$};
      \node (product) at (0,0) {$D_1\times_{\bar D} D_2$};
      \node (D2) at (2,0) {$D_2$};
      \node (D1) at (0,-1.8) {$D_1$};
      \node (DD) at (2,-1.8) {$\bar D$};
      \draw[->] (D)--(product)  node[inner sep=2pt,fill=white, midway]{$\mu$};;
      \draw[->] (D)--(D1) node[below, midway]{$\mu_1$};
      \draw[->] (D)--(D2) node[above, midway]{$\mu_2$};
      \draw[->] (product)--(D1);
      \draw[->] (product)--(D2);
      \draw[->] (D1)--(DD) node[below, midway]{$\pi_1$};
      \draw[->] (D2)--(DD)node[right, midway]{$\pi_2$};
  \end{tikzpicture}
  \end{center}
where $\mu$ is the normalization morphism.

For $i=1$, $2$ let $\beta_i$  be the involution of $D$
corresponding to the double cover $\mu_i$, and write
$\beta_3=\beta_1\beta_2$.  Then $D$ is a curve of
genus $g(D)=2r+1$ and there is an action of $G$ on $D$ such that $e_i$ acts as $\beta_i$, $i=1,2$. By construction $\beta_1$ and $\beta_2$ act freely on $D$.

We have $H^1(D, \bc)^{\chi_0}= (\pi_1\circ\mu_1)^*H^1(\bar D, \bc)$ and 
\begin{equation}\label{eq: eigen D}
  \mu_j^*H^1(D_j, \bc)=H^1(D,\bc)^{\chi_0}\oplus H^1(D,\bc)^{\chi_j} \text{ for }j=1, 2.
\end{equation}
Combining these with the equality $\sum_{\chi\in
\widehat G}\dim_\bc H^1(D,\bc)^{\chi}=\dim_\bc H^1(D,\bc)$, we have $ H^1(D,\bc)^{\chi_3}=0$.

Consider the product action of $G\times G$ on $C\times D$. The induced action of diagonal subgroup $\Delta_G$ on $C\times D$ is free. So the quotient $S=(C\times D)/\Delta_G$ is a surface isogenous to a product of unmixed type, whose invariants are
\begin{align*}
\textrm{$p_g(S)=r$, $q(S)=1$ and $K_S^2=8r$.}
\end{align*}
By the calculation of the eigenspaces of the $G$-actions on the cohomology groups $H^1(C,\bc)$ and $H^1(D,\bc)$ as in \eqref{eq: eigen C} and \eqref{eq: eigen D} respectively, we infer that (\cite[(4.5.2)]{CLZ13})
 \begin{equation}\label{eq: 4.1}
 \begin{split}H^2(S, \bc)&=W\bigoplus
 \left(\bigoplus_{\chi\in \widehat G}
  H^1(C,\bc)^\chi\otimes H^1(D,\bc)^{\bar\chi}\right) \\
 & =W\bigoplus
H^1(C,\bc)^{\chi_2}\otimes H^1(D,\bc)^{\chi_2},\end{split}\end{equation}
where $W =
 H^0(C,\bc)\otimes H^2(D,\bc)\bigoplus H^2(C,\bc)\otimes H^0(D,\bc)$.

Let $\alpha$ be the automorphism  of $S$ induced by $\gamma\times\beta_3\in \op{Aut}(C\times D)$. Then
 $\alpha$ is of order 4. Note that $\gamma$ and $\beta_3$ acts as $-\id$ on $H^1(C,\bc)^{\chi_2}$ and $H^1(D,\bc)^{\chi_2}$  respectively. Hence $\gamma\times\beta_3$ acts trivially on the right hand side of \eqref{eq: 4.1}. It follows that the action of $\alpha$ on $H^2(S, \bc)$ is trivial. Of course, $\alpha$ acts trivially on $H^1(S,\bc)=a_S^*H^1(\bar D,\bc)$, where $a_S\colon S\rightarrow \bar D$ is the Albanese map. Hence $\alpha$ is in $\au(S)$. Since $|\au(S)|\leq 4$ by Theorem~\ref{main}, it can only happen that 
\[
      \au(S)=\langle\alpha\rangle\cong\bz/4\bz.                                                                                                                                                                                                                                                                                                                                                                        
\]
\end{ex}
\begin{rmk}
(i) Example~\ref{ex: 2} (resp.~Example~\ref{ex: 3}) exhausts the possible values of $\chi(\mo_S)$ and hence also of $K_S^2,\,p_g(S),\,e(S)$ of irregular surfaces $S$ with $q(S)=1$, $g(a_S) =3$ and $\au(S)\cong(\bz/2\bz)^2$ (resp.~$\au(S) \cong \bz/4\bz$).

(ii) For all of surfaces $S=(C\times D)/\Delta_G$ in the above examples the group $(G\times G)/\Delta_G\subset\aut(S)$ is not contained in $\au(S)$. In fact, this is true more generally. Namely, let $S=(C\times D)/\Delta_G$ be a surface isogenous to a product of unmixed type with $G$ \emph{abelian} and $q(S)=1$. Then the coset $(G\times G)/\Delta_G$ is well-defined as a group and has an induced action on $S$. The quotient of $S$ by $(G\times G)/\Delta_G$ is isomorphic to $(C/G)\times (D/G)$ with $g(C/G) +g(D/G)=1$ and hence has geometric genus 0. This implies that $(G\times G)/\Delta_G$ is not contained in $\au(S)$.

This phenomenon is reflected in the fact that the smooth fibres of the Albanese map of surfaces of general type with $q(S)=1$ and $|\au(S)|=4$ have an extra involution, see Appendix~\ref{sec: extra}.
\end{rmk}

All in all a remaining problem is to classify irregular surfaces of general type with $|\au(S)|=4$.

\subsection{Examples of irregular surfaces with $\au(S)\cong\bz/3\bz$}
\begin{ex}[$\au(S)\cong\bz/3\bz$ and $g(a_S)=4$]\label{ex: Z3}
Let $G=\langle e_1,e_2\rangle$ be a finite group isomorphic to $(\bz/3\bz)^2$. By the Riemann existence theorem one can construct a $G$-covering $C\rightarrow \bp^1$ with 4 branch points, over which the stabilizers are generated by $e_1, e_2, 2e_1, 2e_2$ respectively. By the Riemann--Hurwitz formula we have $g(C) = 4$. Similarly, one constructs another $G$-covering $D\rightarrow \bar D$ over an elliptic curve $\bar D$ such that there are $3r$ branch points, over which the stabilizers are all generated by $2e_1+2e_2$.  The genus $g(D)$ is $9r+1$ by the Riemann--Hurwitz formula.

Since the two systems of stabilizers of the coverings $C\rightarrow \bp^1$ and $D\rightarrow \bar D$ as above are disjoint, the induced action of the diagonal subgroup $\Delta_G\subset G\times G$ on $C\times D$ is free. Therefore $S:=(C\times D)/\Delta_G$ is a surface isogenous to a product of unmixed type with invariants 
\[
 K_S^2=\frac{8}{|G|}(g(C)-1)(g(D)-1)= 24r\text{ and } \chi(\mo_S) =\frac{1}{8}K_S^2 = 3r.
\]
We have $q(S)=g(\bar D)=1$. The surfaces form an infinite series as $r$ varies. 

Consider the character $\chi$ such that
\[
\chi(e_1)=\chi(e_2) = \exp(\frac{2\pi\sqrt{-1}}{3}).
\]
By Lemma~\ref{lem: non0 es}, $\chi$ and $\chi^2$ are the only characters
whose eigenspaces of the $G$-actions on $H^1(C,\bc)$ and $H^1(D,\bc)$ are simultaneously nonzero. By the expression of $H^2(S,\bc)$ in \cite[(4.5.2)]{CLZ13}, $\ker(\chi)\cong\bz/3\bz$ acts trivially on $H^2(S,\bc)$. One also sees easily that $\ker(\chi)$ acts trivially on $H^1(S,\bc)$, so it is in fact a subgroup of $\au(S)$.

Since $|\au(S)|\leq 4$ by Theorem~\ref{main}, it must hold
\[
 \au(S)=\ker(\chi)\cong \bz/3\bz. 
\]
\end{ex}

\appendix

 \section{Topological defect of curves}\label{sec: top
 def} 
 \begin{defn}\label{def: top def}
 For any effective divisor $D$ on a smooth projective surface $S$ we define
 $$\epsilon(D) = e(D) + 2p_a(D) - 2.$$  It is called the
 \emph{topological defect} of $D$.
 \end{defn}
 It is well-known that $\epsilon(D)\geq 0$ and the equality holds if
 and only if  $D$ is a smooth curve. If $D$ is reduced then
 $\epsilon(D)$ is the sum of local contributions from the
 singularities.
  \begin{lem}\label{lem: delta  reduced}
  Let $D\subset S$ be a reduced curve on a smooth projective surface. For a point
  $p\in D$ denote by $\mu_p(D)$ the multiplicity of $D$ at $p$. Then
  $$\epsilon(D)\geq \sum_{p\in D} (\mu_p(D)-1)^2,$$ and the equality holds if and only
  if every singularity $p$ of $D$ is ordinary, that is, the strict
  transform of $D$ in the blow-up of $S$ at every singularity $p$ of
  $D$ contains exactly $\mu_p(D)$ points over $p$.
  \end{lem}
  \begin{proof}
  Let $\rho\colon\tilde S\rightarrow S$ be the simultaneous blow-up of
  $S$ at all the singularities of $D$ and $E_p$ the exceptional
  divisor over a singularity $p$ of $D$. Let $\tilde D\subset \tilde
  S$ the strict transform of $D$. Then $\tilde D= \rho^*D -\sum_p
  \mu_p(D) E_p$. As a set the inverse image of $p$ in $\tilde
  D$ is $E_p\cap\tilde D $ and $e(\tilde
  D)=e(D) + \sum_p(\#\,\tilde D \cap E_p - 1)$. We have
  $\#\,E_p\cap\tilde D\leq \mu_p(D)=\tilde D  E_p$ and  the equality
  holds  if and only if $p\in D$ is an ordinary singularity. If $p\in
  D$ is an ordinary singularity then $\tilde D$ is already smooth over
  $p$.
  
  Now we have
  \begin{align*}
   \epsilon(D) &= e(D) + (K_S + D) D\\
             &= e(D) + \left(\rho^* K_S + \rho^* D\right) \rho^* D\\
             &= e(D) +\left(K_{\tilde S} + \tilde D + \sum_p(\mu_p(D)-1) E_p\right) \left(\tilde D +\sum_p \mu_p(D) E_p\right)\\
             &= e(\tilde D)+ 2p_a(\tilde D) - 2 - \sum_p \left(\#\,\tilde D \cap E_p - 1\right) +\sum_p\mu_p(D)(\mu_p(D)-1)\\
             &\geq e(\tilde D)+ 2p_a(\tilde D) - 2- \sum_p(\mu_p(D) -1) +\sum_p\mu_p(D)(\mu_p(D)-1)\\
             &= \epsilon(\tilde D)  + \sum_p (\mu_p(D) -1)^2\\
             &\geq \sum_p (\mu_p(D) -1)^2.
  \end{align*}
   If $\epsilon(D)=\sum_p
  (\mu_p(D) -1)^2$ then both of the inequalities above
  become equalities and this is equivalent to each singularity of $D$
  being ordinary.
  \end{proof}

  In case $D$ is nonreduced the situation is more complicated.
  Nevertheless we will try to get a control on $\epsilon(D)$ when $D$ is
  a fibre of some fibration. From the following lemma we see that the
  topological defect of a fibre has contributions from the
  singularities of the reduced part as well as the irreducible
  components with multiplicity.
  \begin{lem}\label{lem: delta Fb}
  Let $f\colon S\rightarrow B$ be a fibration of a smooth projective surface onto
  a curve and $F$ a fibre of $f$. Write $F_\red$ for the
  reduced part of $F$. Then we have
  $$
   \epsilon(F)
             \geq \epsilon(F_\red) +K_S(F - F_\red),
  $$
  where the equality holds  if and only if  $F= m F_\red$ for a  positive integer $m$.
  \end{lem}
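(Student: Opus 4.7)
The plan is to reduce the statement to Zariski's Lemma applied to the reduced fibre $F_b'$, viewed as an effective divisor supported on the connected fibre $F_b$. The first step is to expand both topological defects by the adjunction formula: since $F_b^2=0$ one gets $\delta(F_b)=e(F_b)+K_SF_b$, while for $F_b'$ the divisor does not satisfy $(F_b')^2=0$ in general, so $\delta(F_b')=e(F_b')+K_SF_b'+(F_b')^2$. The curves $F_b$ and $F_b'$ have the same underlying topological space, hence $e(F_b)=e(F_b')$, and subtraction yields the identity
\[
 \delta(F_b)-\delta(F_b')=K_S(F_b-F_b')-(F_b')^2.
\]
Thus the inequality to be proved is equivalent to $(F_b')^2\leq 0$, and the equality characterization amounts to determining when $(F_b')^2=0$.

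This is precisely the content of Zariski's Lemma for fibrations: since $F_b$ is connected and $F_b'=\sum_i C_i$ is an effective divisor supported on the components of $F_b=\sum_i m_iC_i$, the intersection form on the components of $F_b$ is negative semidefinite, so $(F_b')^2\leq 0$, and $(F_b')^2=0$ holds if and only if $F_b'$ is a positive rational multiple of $F_b$. Since the multiplicity of every component of $F_b'$ is $1$, the latter condition is equivalent to all the multiplicities $m_i$ being equal to one and the same positive integer $m$, that is, $F_b=mF_b'$.

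Combining the two observations gives $\delta(F_b)\geq\delta(F_b')+K_S(F_b-F_b')$ with equality if and only if $F_b=mF_b'$ for some positive integer $m$. The only step that requires a little care is the identity $e(F_b)=e(F_b')$; this is immediate because the topological Euler characteristic of a scheme depends only on its underlying topological space, and passing from $F_b$ to its reduction does not change this. No serious obstacle is expected, since both the adjunction computation and Zariski's Lemma are standard.
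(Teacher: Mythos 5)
Your proof is correct and follows essentially the same route as the paper's: both reduce the inequality to $(F_b')^2\leq 0$ via $e(F_b)=e(F_b')$ together with the adjunction formula and $F_b^2=0$, and both settle the equality case by Zariski's lemma. The only difference is cosmetic bookkeeping in how the adjunction terms are grouped.
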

  \begin{proof}
  We have
  \begin{align*}
   \epsilon(F) & = e(F_\red) + 2p_a(F_\red) -2 + 2p_a(F) - 2p_a(F_\red)\\
                 &= \epsilon(F_\red) + 2p_a(F) - 2p_a(F_\red)\\
                 & = \epsilon(F_\red) + K_S(F - F_\red) + F^2-F_\red^2\\
                & = \epsilon(F_\red)+ K_S(F - F_\red) -F_\red^2\hspace{2cm} (\text{since } F^2=0)\notag\\
                &\geq \epsilon(F_\red)+ K_S(F - F_\red).
                \end{align*}
and the inequality becomes equality if and only if $F_\red^2=0$.
   By Zariski's lemma (\cite[III, Lemma~8.2]{BHPV04}) the later
   is equivalent to $F= m F_\red$
  for some positive integer $m$.
  \end{proof}

 The usefulness of topological defects of singular fibres lies in the
 fact that they determine the (global) topological Euler
 characteristic of the fibration.
 \begin{lem}[{\cite[III, Prop.~11.4]{BHPV04}}]\label{lem: top Euler}
  Let $f\colon S\rightarrow B$ be a fibration from a smooth projective surface onto a smooth curve $B$. Let $F$ be a smooth fibre of $f$ and $F_b$ a fibre over any point $b\in B$. Then 
\[
   e(S) = e(F)e(B)+ \sum_{b\in B} \epsilon(F_b). 
\]
In particular, if the genus $g(B)=1$ then $e(S)=\sum_{b\in B}\epsilon(F_b)$.
 \end{lem}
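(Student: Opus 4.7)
The plan is to stratify the base $B$ and exploit both the multiplicativity and the additivity of the topological Euler characteristic. Let $\Sigma\subset B$ denote the finite set of critical values of $f$, so $f^{-1}(\Sigma)$ is the union of all singular fibres. Over the open subset $B\setminus\Sigma$, Ehresmann's fibration theorem guarantees that $f$ is a $C^\infty$ locally trivial fibre bundle with fibre diffeomorphic to $F$; since the Euler characteristic is multiplicative on such bundles, we have
\[
 e\bigl(f^{-1}(B\setminus\Sigma)\bigr) = e(F)\cdot e(B\setminus\Sigma) = e(F)\bigl(e(B)-|\Sigma|\bigr).
\]

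Next I would invoke additivity of the Euler characteristic with respect to the decomposition of $S$ into the open piece $f^{-1}(B\setminus\Sigma)$ and the closed pieces $F_b$ for $b\in\Sigma$. Either via compactly supported cohomology or via the long exact sequence of the pair this gives
\[
 e(S) = e\bigl(f^{-1}(B\setminus\Sigma)\bigr) + \sum_{b\in\Sigma} e(F_b).
\]

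The final ingredient is the relation $\delta(F_b) = e(F_b) - e(F)$. To see this, note that because the fibres of $f$ are all algebraically equivalent we have $(K_S + F_b)F_b = (K_S+F)F = 2g(F)-2$, so $p_a(F_b) = g(F)$ for every $b\in B$. Substituting into the definition of the topological defect yields
\[
 \delta(F_b) = e(F_b) + 2p_a(F_b) - 2 = e(F_b) + 2g(F) - 2 = e(F_b) - e(F),
\]
using $e(F) = 2 - 2g(F)$ for the smooth general fibre. Plugging this back in,
\[
 e(S) = e(F)\bigl(e(B)-|\Sigma|\bigr) + \sum_{b\in\Sigma}\bigl(e(F)+\delta(F_b)\bigr) = e(F)e(B) + \sum_{b\in\Sigma}\delta(F_b),
\]
and since $\delta(F_b)=0$ whenever $F_b$ is smooth the sum extends harmlessly over all $b\in B$. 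The particular case $g(B)=1$ is immediate from $e(B)=0$.

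The only step that deserves genuine care is the additivity of the Euler characteristic across the open/closed decomposition, which is the main (mild) obstacle; the rest is a bookkeeping exercise combining Ehresmann's theorem with the invariance of $p_a$ in an algebraic family.
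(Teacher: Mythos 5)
Your argument is correct and is essentially the standard proof of this formula; the paper itself gives no proof, citing \cite[III, Prop.~11.4]{BHPV04}, whose argument is the same stratification of $B$ by the critical values combined with the identity $\delta(F_b)=e(F_b)-e(F)$, which you correctly derive from the constancy of $p_a$ along the algebraically equivalent fibres. The one step you rightly single out, additivity of $e$ across the open/closed decomposition, is legitimate here (compactly supported and ordinary Euler characteristics agree for complex algebraic varieties, and the fibres form a CW subpair), so there is no gap.
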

 
For the reader's convenience, we recall the classification of singular fibres with topological defects 1 and 2 obtained in \cite{Cai01}.
   \begin{lem}\label{lem: e_f=1}\cite[Remark 2.6]{Cai01} 
   Let  $f \colon S \rightarrow B$ be a relatively minimal
   fibration of genus $g\geq 3$, and  $F_b$  a singular fibre of  $f$. If
   $\epsilon(F_b)=1$ (cf. Definition A.1)
  then $F_b$ belongs to  one of the following types.
   \begin{enumerate}
   \item
     an
   irreducible  curve  with exactly one node;
   \item
     a  sum of two smooth irreducible  curves meeting
   transversally in a point.
   \end{enumerate}
   \end{lem}
  \begin{lem}\label{lem: e_f=2}\cite[Lemma 2.5]{Cai01} Let  $f $ and
   $F_b$  be as in Lemma~\ref{lem: e_f=1}.
  If  $\epsilon(F_b)=2$ then $F_b$ belongs to  one of the following
  types.
   \begin{enumerate}
   \item
   $F_b=2C$, where $C$ is an irreducible smooth curve of genus $2$
  (this case occurs only when $g=3$);
   \item
   $F_b$ is an irreducible  curve  with exactly two nodes, and the
  normalization of $F_b$ is a curve of genus $g-2$;
  \item
   $F_b$ is an irreducible  curve  with one cusp, and the
   normalization of $F_b$ is a curve of genus $g-1$;
   \item
    $F_b=C_1+C_2$, where $C_i$ are irreducible
    curves meeting
   transversally in a point, and either $C_1$ or $C_2$ (and not both)
   has a node;
  \item
   $F_b=C_1+C_2$, where $C_i$ are irreducible smooth curves
  meeting transversally in two points, and $g(C_1)+g(C_2)=g-1$;
  \item $F_b=C_1+C_2+C_3$, where $C_i$ are irreducible smooth
   curves with $C_1C_2=C_2C_3=1$, $C_1C_3=0$,
    and $g(C_1)+g(C_2)+g(C_3)=g$.
   \end{enumerate}
  \end{lem}

We have the following description of a fibre containing an isolated fixed point of an automorphism acting on a fibration.
\begin{lem}(\cite[Lemma 1.4]{Cai09}, \cite[Lemma 2.2]{Cai12})\label{lem: fixed}
Let $f\colon S\to B$ be a relatively minimal fibration of genus $g\geq1$, and $\sigma$  an automorphism  of finite  order $r$ of $S$ with $f\circ\sigma=f$. Let $p\in S$ be an isolated fixed point of $\sigma$ and $F_b$ the fibre containing it.  Then the following holds.
\begin{enumerate}
  \item $F_b$ is singular  at $p$;
\item if  the multiplicity $\textrm{mult}_pF_b=2$ and $r$ is an odd prime, then $p$ is a node of $F_b$, and  the action of $\sigma $ at    $p$ is of weight ${1\over r}(1, r-1)$;
\item if the action of $\sigma$ at  $p$ is of weight ${1\over 4}(1, 1)$, then $\textrm{mult}_pF_b$ is divisible by $4$;
\item if the action of $\sigma$ at  $p$ is of weight ${1\over 4}(1, 2)$ {\rm (}resp. ${1\over 4}(1, 3)${\rm )} and $\op{mult}_pF_b=2$, then $p\in F_b$ is not {\rm (}resp. is{\rm )}  an ordinary node.
\end{enumerate}
\end{lem}

\section{An extra involution of curves of genera 3 and 5}\label{sec: extra}
Let $C$ be a smooth projective curve and $G$ a finite group of automorphisms. In certain situations one can lift an automorphism of the quotient $C/G$ to $C$. This is the case when $C$ is a smooth fibre of the Albanese fibration $a_S\colon S\rightarrow \alb(S)$ for a surface of general type with $q(S)=1$ and $|\au(S)|=4$ and the  group $G$ is the restriction of $\au(S)$ to $C$.

In disguise the following result is contained in \cite[Theorem~3.4]{Pol06}. 
\begin{lem}\label{lem: g3}
Let $C$ be a smooth curve of genus 3. Suppose that a group $G\cong (\bz/2\bz)^2$ acts faithfully on $C$ with $g(C/G)=1$. Then $C$ is hyperelliptic.
\end{lem}
\begin{proof}
Let $\sigma_1,\sigma_2$ and $\sigma_3$ be the three involutions from $G$. Then we have $\langle\sigma_i\rangle\cap\langle\sigma_j\rangle=\{\id\}$ for $1\leq i<j\leq 3$ and $G=\bigcup_{1\leq i\leq 3}\left<\sigma_i\right>$. The following holds by \cite[Thm.~5.9]{Ac94}:
\[
2g(C) + 4 = \sum_{1\leq i\leq 3} 2g(C/\sigma_i).
\]
Necessarily there is an $i$ such that $g(C/\sigma_i)=2$.  The curve $C$ is hyperelliptic by \cite[Lemma~5.10]{Ac94}.
\end{proof}

\begin{rmk}
Let $C$ be a smooth curve of genus 3. Suppose that $G\cong \bz/4\bz$ acts faithfully on $C$ with $g(C/G)=1$. Then $C$ also has an additional involution $\tau$ such that the group of automorphisms generated by $\tau$ and $G$ is isomorphisc to $\bz/4\bz\oplus\bz/2\bz$ if $C$ is hyperelliptic and is isomorphic to $D_8$ if $C$ is not hyperelliptic (\cite[Sec.~6.6.5]{Dolg12}). 

We point out here an error in the classification of the full automorphism groups of curves of genus 3 in \cite{KK79}, where curves $C$ of genus 3 with $\aut(C)\cong\bz/4\bz$ and $g(C/\aut(C))=1$ are allowed, see \cite[page~295]{KK79}.
\end{rmk}

\begin{lem}\label{lem: g5}
Let $C$ be a smooth curve of genus 5 and $G$ a group of order 4 acting freely on $C$. Then the quotient $C/G$ is a curve of genus 2 and the hyperelliptic involution of $C/G$ lifts to an involution of $C$.
\end{lem}
\begin{proof}
The assertion that $g(C/G)=2$ follows from the Riemann--Hurwitz formula. The hyperelliptic involution of $C/G$ lifts to an involution of $C$ by \cite[Cor.~4.13 and 4.12]{Ac94}.
\end{proof}

\begin{cor}\label{cor: g5}
Let $f\colon S\rightarrow B$ be a relatively minimal fibration of genus $5$ from a smooth projective surface $S$ onto a smooth projective curve $B$. Suppose that $G\subset\aut(S)$ is a finite group of automorphisms of order 4, which preserves the fibres of $f$ and acts freely on the smooth fibres. Then there is an involution $\tau\in\aut(S)\setminus G$ preserving the fibres of $f$.
\end{cor}
\begin{proof}
Let $U\subset B$ be an open subscheme, over which the fibration $f$ is smooth. Denote $S_U=f^{-1}(U)$. Then the group $G$ acts freely on $S_U$ and, by Lemma~\ref{lem: g5}, the quotient fibration $h\colon S_U/G\rightarrow U$ has genus 2. Moreover, for any $b\in U$, the hyperelliptic involution $\bar\tau_b$ of  the genus 2 curve $f^*b/G$ lifts to an involution $\tau_b$ of $f^*b$. 

Look at the following commutative diagram of fundamental groups with exact rows:
 \begin{equation}\label{diag: fun gp}
\begin{aligned}
   \xymatrix{
  1 \ar[r] & \pi_1(f^*b, x)\ar[r]\ar@{_{(}->}[d] & \pi_1(S_U,x) \ar[r]\ar@{_{(}->}[d] &\pi_1(U,b) \ar[r]\ar@{=}[d] & 1\\
  1 \ar[r] & \pi_1(h^*b,\bar x)\ar[r] & \pi_1(S_U/G,\bar x) \ar[r] &\pi_1(U, b) \ar[r] & 1.
  }
  \end{aligned}
 \end{equation}
where $\bar x$ is a fixed point of the hyperelliptic involution $\bar\tau_b$ and $x\in f^*b$ is a point over $\bar x$. The fact that the hyperelliptic involution $\bar\tau_b$ of $h^*b$ lifts to $f^*b$ means that $\bar\tau_{b*}$ preserves the image of $\pi_1(f^*b,x)\rightarrow \pi_1(h^*b,\bar x)$. By the commutative diagram \eqref{diag: fun gp} we infer that $\bar\tau_*$ preserves the image of $\pi_1(S_U,x)\rightarrow \pi_1(S_U/G,\bar x)$, so the hyperelliptic involution $\bar\tau_U$ of the fibration $h\colon S_U/G\rightarrow U$ lifts to an automorphism $\tau_U$ of $S_U$, which preserves the fibres and induces the involution $\tau_b$ on $f^*b$. One sees immediately $\tau_U$ is an involution itself. 

The relative minimality of $f\colon S\rightarrow B$ guarantees that the involution $\tau_U$ extends to the whole $S$, still preserving the fibres.
\end{proof}


\begin{thebibliography}{99}
\bibitem[Ac94]{Ac94}
Accola, R. Topics in the theory of Riemann surfaces. Lecture Notes in Math., {\bf 1595}, Springer-Verlag, Berlin, 1994. x+105 pp.
\bibitem[AS68]{AS68}
Atiyah, M. F.; Singer, I. M. The index of elliptic operators. III.  Ann. of Math. (2) {\bf 87} (1968) 546--604. 
\bibitem[BP83]{BP83} Barth, W.; Peters, C. A. M. Automorphisms of Enriques surfaces. Invent. Math. 73 (1983), no. 3, 383--411. 
\bibitem[Be78]{Be78}
Beauville, A. Complex algebraic surfaces. Translated from the 1978 French original by R. Barlow, with assistance from N. I. Shepherd-Barron and M. Reid. Second edition. London Math. Soc. Stud. Texts, 34. Cambridge University Press, Cambridge, 1996. x+132 pp.
\bibitem[Be79]{Be79} 
Beauville, A. L'application canonique pour les surfaces de type g\'en\'eral. (French) Invent. Math. {\bf 55} (1979), no. 2, 121--140.
\bibitem[BHPV04]{BHPV04}
Barth, W. P.; Hulek, K.; Peters, C. A. M.; Van~de~Ven, A. Compact
complex surfaces. Second edition. Ergeb. Math. Grenzgeb. (3). Folge. {\bf 4}. Springer-Verlag, Berlin, 2004.
xii+436 pp.
\bibitem[B87]{B87}
Broughton, S. The homology and higher representations of the
automorphism group of a Riemann surface.  Trans. Amer. Math. Soc.
{\bf 300}  (1987),  no. 1, 153--158.
\bibitem[B91]{B91}
Broughton, S. Classifying finite group actions on surfaces of low
genus.  J. Pure Appl. Algebra  {\bf 69}  (1991),  no. 3, 233--270.
\bibitem[Bo]{Bo}Bombieri,  E. Canonical models of surfaces of
general type, Publ. Math. Inst. Hautes \'Etudes Sci.
 {\bf 42} (1973)  171--219.
\bibitem[BC04]{BC04} Bauer,  I.; Catanese, F. Some new surfaces with $p_g=q=0$. The Fano Conference, 123--142, Univ. Torino, Turin, 2004.
\bibitem[Cai01]{Cai01} Cai, J.-X.
On Abelian automorphism groups of fiber surfaces of small genus,  Math.~Proc.~Cambridge~Philos.~Soc.~{\bf 130} (2001) 161--174.
\bibitem[Cai04]{Cai04}  Cai, J.-X.
Automorphisms   of  a  surface  of general type acting trivially in
cohomology,  Tohoku Math. J. {\bf 56} (2004) 341--355.
\bibitem[Cai06]{Cai06}   Cai, J.-X.
Automorphisms   of   fiber  surfaces of genus $2$,  inducing the
identity
 in cohomology,  Trans. Amer. Math. Soc. {\bf
 358} (2006) 1187--1201.
 \bibitem[Cai07]{Cai07}
  Cai, J.-X. Classification of fiber surfaces of genus 2 with automorphisms acting trivially in cohomology,  Pacific J. Math.
 {\bf232}, No. 1, (2007) 43--59.
 \bibitem[Cai09]{Cai09}     Cai, J.-X.
  Automorphisms  of elliptic surfaces, inducing the
identity
 in cohomology,  J. Algebra
 {\bf322} (2009) 4228--4246.
\bibitem[Cai12]{Cai12} Cai,  J.-X. Automorphisms   of  an irregular surface  of general type acting trivially in
cohomology,
     J. Algebras
 {\bf367} (2012) 95--104.
 \bibitem[CLZ13]{CLZ13}     Cai, J.-X. ; Liu, W.; Zhang, L. Automorphisms of surfaces of general type with
$\MakeLowercase{q}\geq 2$  acting trivially in cohomology, Compos. Math. {\bf 149} (2013), no. 10, 1667--1684.
\bibitem[Cat84]{Cat84}
Catanese, F. On the moduli spaces of surfaces of general type. J. Differential Geom. {\bf 19} (1984), no. 2, 483--515.
\bibitem[Cat00]{Cat00}
Catanese, F. Fibred surfaces, varieties isogenous to a product and related moduli spaces, Amer. J. Math. {\bf 122} (2000) 1--44.
\bibitem[Cat13]{Cat13} Catanese, F. A superficial working guide to deformations and moduli. Handbook of moduli. Vol. I, 161--215, Adv. Lect. Math. (ALM), {\bf 24}, Int. Press, Somerville, MA, 2013. 
\bibitem[CC91]{CC91}
Catanese, F.;  Ciliberto, C. Surfaces with $p_g=q=1$. Problems in the theory of surfaces and their classification (Cortona, 1988), 49--79, Sympos. Math., XXXII, Academic Press, London, 1991.
\bibitem[Dold95]{Dold95}
Dold, A. Lectures on algebraic topology. Reprint of the 1972 edition. Classics in Mathematics. Springer-Verlag, Berlin, 1995. xii+377 pp.
\bibitem[Dolg12]{Dolg12}
Dolgachev, I. Classical algebraic geometry. A modern view. Cambridge University Press, Cambridge, 2012. xii+639 pp.
\bibitem[Har77]{Har77} Hartshorne, R. Algebraic geometry, Grad. Texts in Math. {\bf52}, Springer, New York, 1977.
\bibitem[Hor77]{Hor77} Horikawa, E. On algebraic surfaces with pencils of curves of genus 2, In ``Complex analysis and algebraic geometry'', a collection of papers dedicated to K.~Kodaira, pp.~79--90. Iwanami Shoten and Cambridge Univ.~Press, 1977.
\bibitem[Hor81]{Hor81} Horikawa, E. Algebraic surfaces of general type with small
 $c_1^2$, V, J. Fac. Sci. Univ. Tokyo, Sec. IA. Math. {\bf 28} (1981)
 745--755.
 \bibitem[HZ74]{HZ74} Hirzebruch, F.; Zagier, D. The Atiyah-Singer theorem and elementary number theory,
 Math. Lecture Series {\bf 3}, Publish or Perish, Inc., 1974.
 \bibitem[I78]{I78}
  Illman, S. Smooth equivariant triangulations of G-manifolds for G a finite group. Math. Ann. {\bf 233} (1978), no. 3, 199--220.
 \bibitem[I83]{I83}
  Illman, S. The equivariant triangulation theorem for actions of compact Lie groups. Math. Ann. {\bf 262} (1983), no. 4, 487--501.
 \bibitem [JL15]{JL15} Javanpeykar, A.; Loughran, D. The moduli of smooth hypersurfaces with level structure, arXiv:1511.0929, 2015, preprint.
 \bibitem[J83]{J83}
Johnson, F. E. A. On the triangulation of stratified sets and singular varieties. Trans. Amer. Math. Soc. 275 (1983), no. 1, 333--343. 
\bibitem[KK79]{KK79} Kuribayashi, A.; Komiya, K. On Weierstrass points and automorphisms of curves of genus three. Algebraic geometry (Proc. Summer Meeting, Univ. Copenhagen, Copenhagen, 1978), pp. 253--299, Lecture Notes in Math., {\bf 732}, Springer, Berlin, 1979. 
\bibitem[Liu15]{Liu15} Liu, W. Surfaces of general type with $q=2$ are rigidified, arXiv:1505.03929, 2015,  preprint.
\bibitem[Muk10]{Muk10} Mukai, S. Numerically trivial involutions of Kummer type of an Enriques surface. Kyoto J. Math. {\bf 50} (2010), no. 4, 889--02. 
\bibitem[MN84]{MN84} Mukai, S.; Namikawa, Y. Automorphisms of Enriques surfaces which act trivially on the cohomology groups. Invent. Math. {\bf 77} (1984), no. 3, 383--397.
\bibitem[Pe80]{Pe80} Peters, C. A. M. On automorphisms of compact K\"ahler surfaces. Journ\'es de G\'eometrie Alg\'ebrique d'Angers, Juillet 1979/Algebraic Geometry, Angers, 1979, pp. 249--267, Sijthoff \& Noordhoff, Alphen aan den Rijn--Germantown, Md., 1980. 
\bibitem[Pol06]{Pol06}
Polizzi, F. Surfaces of general type with $p_g=q=1, K^2=8$ and bicanonical map of degree $2$. Trans. Amer. Math. Soc. {\bf 358} (2006), no. 2, 759--798.
\bibitem[Pop77]{Pop77} Popp, H. Moduli theory and classification theory of algebraic varieties. Lecture Notes in Mathematics, Vol. {\bf 620}. Springer-Verlag, Berlin-New York, 1977. vi+189 pp.
\bibitem[Se95]{Se95}
Serrano, F. The projectivised cotangent bundle of an algebraic surface, Arch.~Math., Vol.~65, 168--175 (1995).
\bibitem[U76]{U76}
Ueno, K. A remark on automorphisms of Enriques surfaces. J. Fac. Sci. Univ. Tokyo Sect. I A Math. {\bf 23} (1976), no. 1, 149--165. 
   \bibitem[X85b]{X85b}  Xiao, G. Surfaces fibrees en courbes de genre deux, Lecture Notes in Math., {\bf 1137},
Springer-Verlag,
 1985.
 \bibitem[X94]{X94}  Xiao, G.
  Bound of automorphisms of surfaces of general type, I,
    Ann. of Math. (2) {\bf 139}(1994), no. 1, 51--77.
   \bibitem[Y77]{Y77}
   Yau, S. T. Calabi's conjecture and some new results in algebraic geometry. Proc. Nat. Acad. Sci. U.S.A. 74 (1977), no. 5, 1798--1799.
\end{thebibliography}
\end{document}